\long\def\ignore#1{}
\newtheorem{THM}{Theorem}[section]
\newtheorem{LEM}[THM]{Lemma}
\newtheorem{CON}[THM]{Conjecture}
\newtheorem{COR}[THM]{Corollary}
\newtheorem{CLA}{Claim}[section]
\newcommand{\pf}{\textbf{Proof}.\quad}
\newcommand{\qqed}{\hfill $\blacksquare$\vspace{1mm}}
\newcommand{\ve}{\varepsilon }
\newcommand{\CC}{\mathcal{C}}
\newcommand{\pbar}{\overline{\varphi}}
\newcommand{\pb}{\varphi^{\text{bad}}}
\newcommand{\pbo}{\varphi_1^{\text{bad}}}
\begin{document}
\title{Independence number of edge-chromatic critical graphs}

\author{%
	Yan Cao$^{\dag}$ 
	\qquad Guantao Chen$^{\dag}$ \qquad Guangming Jing$^{\dag}$ \qquad Songling Shan$^{\ddag}$\\
	$^{\dag}$Department of Mathemetics and Statistics, 25 Park Place, 14th Floor\\
	Georgia State Univeristy,
	Atlanta, GA 30303\\
		$^{\ddag}$Department of Mathematics, 1326 Stevenson Center\\
	Vanderbilt University, Nashville, TN 37240
}

\date{\today}
\maketitle

 \begin{abstract}
Let $G$ be a simple graph with maximum degree $\Delta(G)$ and chromatic index $\chi'(G)$. A classic result of Vizing indicates that either $\chi'(G )=\Delta(G)$ or $\chi'(G )=\Delta(G)+1$. The graph $G$ is called $\Delta$-critical if $G$ is connected, 
$\chi'(G )=\Delta(G)+1$ and for any $e\in E(G)$,  $\chi'(G-e)=\Delta(G)$.  Let $G$ be an $n$-vertex $\Delta$-critical graph. Vizing conjectured that $\alpha(G)$, the independence number of $G$, is at most $\frac{n}{2}$. The current best result on this conjecture, shown by Woodall, is that $\alpha(G)<\frac{3n}{5}$. We show  that for any given $\ve\in (0,1)$, there exist positive constants $d_0(\ve)$ and $D_0(\ve)$ such that if $G$ is an $n$-vertex 
 $\Delta$-critical graph with minimum degree at least $d_0$ and maximum degree at least $D_0$,
 then $\alpha(G)<(\frac{{1}}{2}+\ve)n$. 
In particular, we show that if $G$ is an $n$-vertex $\Delta$-critical graph with  minimum degree at least $d$ and $\Delta(G)\ge (d+2)^{5d+10}$, then 
\[
\alpha(G) < \left.
\begin{cases}
\frac{7n}{12}, & \text{if $d= 3$; }  \\
\frac{4n}{7}, & \text{if $d= 4$; } \\
\frac{d+2+\sqrt[3]{(d-1)d}}{2d+4+\sqrt[3]{(d-1)d}}n<\frac{4n}{7}, & \text{if $d\ge 19$. }
\end{cases}
\right.
\]

 \smallskip
 \noindent
 \textbf{Keywords:} Chromatic index,  Edge-chromatic critical, Vizing's Independence Number Conjecture.

 \end{abstract}

\vspace{2mm}

\section{Introduction}

All graphs considered are simple and finite. Let $G$ be a graph.
We denote by $V(G)$ and $E(G)$ the vertex set and edge set of $G$, respectively. 
For a vertex $v\in V(G)$, $N_G(v)$ is the set of neighbors of $v$ in $G$, and 
 $d_G(v)=|N_G(v)|$ is the degree of vertex $v$ in $G$. We simply write $N(v)$ and 
 $d(v)$ if $G$ is clear. 
For $e\in E(G)$, $G-e$
denotes the graph obtained from $G$ by deleting the edge $e$. 
We reserve the symbol $\Delta$ for $\Delta(G)$, the maximum degree of $G$
throughout  this paper.  The {\it independence number\/} of $G$, denoted $\alpha(G)$, 
is the largest size of an independent set in $G$.

An {\it edge $k$-coloring\/} of $G$ is a mapping $\varphi$ from $E(G)$ to the set of integers
$[1,k]:=\{1,\cdots, k\}$, called {\it colors\/}, such that  no adjacent edges receive the same color under $\varphi$. The {\it chromatic index\/} of $G$, denoted $\chi'(G)$, is defined to be the smallest integer $k$ so that $G$ has an edge $k$-coloring. 
We denote by $\CC^k(G)$ the set of all edge $k$-colorings of $G$. 
In 1965, Vizing~\cite{Vizing-2-classes} showed that a graph of maximum degree
$\Delta$ has  chromatic index either $\Delta$ or $\Delta+1$.
If $\chi'(G)=\Delta$, then $G$ is said to be of {\it class 1\/}; otherwise, it is said to be
of {\it class 2\/}.  
Holyer~\cite{Holyer} showed that it is NP-complete to determine whether an arbitrary graph is of class 1. Similar to vertex coloring, it is essential to color the ``core'' part of a graph and then extend the coloring to the whole graph without increasing the total number of colorings. This leads to the concept of {\it edge-chromatic criticality\/}. A graph $G$ is called {\it edge-chromatic critical\/}
if for any proper subgraph $H\subseteq G$, $\chi'(H)<\chi'(G)$. 
We say $G$ is {edge-$\Delta$-critical} or simply {\it $\Delta$-critical\/} if $G$
is edge-chromatic critical and $\chi'(G)=\Delta+1$. It is clear that $G$ is $\Delta$-critical if and only if $G$ is connected with $\chi(G)=\Delta+1$ and $\chi'(G-e)=\Delta$, for any edge $e\in E(G)$.   By this definition,  every class 2 graph with maximum degree $\Delta$ can be reduced to a $\Delta$-critical graph by removing edges or vertices. Vizing conjectured that $\Delta$-critical graphs have some special structural properties. In particular, he proposed the following conjectures.

\begin{CON}[Vizing's Independence Number Conjecture~\cite{vizing-ind}]\label{ind}
	Let $G$ be a $\Delta$-critical graph of order $n$.
	Then $\alpha(G)\le n/2$.
\end{CON}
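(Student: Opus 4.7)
The plan is to attack the conjecture by combining Vizing-type adjacency constraints with an amortized counting argument. Let $I$ be a maximum independent set in a $\Delta$-critical graph $G$ on $n$ vertices, and set $B=V(G)\setminus I$. Every edge incident to $I$ has its other endpoint in $B$, so proving $\alpha(G)\le n/2$ is equivalent to $|B|\ge|I|$. The natural route is to double-count the bipartite edge set $E_G(I,B)$ and to extract from the criticality hypothesis a fractional injection $I\hookrightarrow B$: informally, each vertex of $I$ must ``spend'' a unit of weight into $B$, and no vertex of $B$ should be forced to absorb more than one such unit on average.

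First, I would marshal every available local constraint. Vizing's Adjacency Lemma asserts that any vertex $v$ of degree $k\ge 2$ in a $\Delta$-critical graph has at least $\Delta-k+1$ neighbors of degree $\Delta$. Successive strengthenings---the Kierstead path lemma, Zhang's bounds on major neighbors of an edge, and the Tashkinov-tree machinery---give finer information about the degree multiset in the neighborhood of each edge $uv$. Each vertex of $I$ therefore ``demands'' many high-degree vertices in $B$, but the essential difficulty is overcounting: one high-degree vertex of $B$ can be demanded by as many as $\Delta$ vertices of $I$.

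Second, I would formalize this as a discharging scheme on $V(G)$. Assign initial charges $\mu(v)=d(v)-c_0$ for a threshold $c_0$ chosen so that the total charge is nonnegative exactly at the extremal ratio $|I|=n/2$; low-degree vertices of $I$ then carry negative charge and high-degree vertices of $B$ carry positive charge. Using the refined adjacency constraints from the previous step, I would design discharging rules that push sufficient positive charge from $B$ across the edges of $E_G(I,B)$ into $I$, and then read off $|B|\ge|I|$ from the invariance of total charge. Woodall's bound $\alpha(G)<3n/5$ is recovered by the crudest instance of this strategy; reaching $n/2$ would require the scheme to be simultaneously tight on every potential extremal configuration (for instance, configurations where $I$ consists mostly of vertices of the minimum degree and $B$ mostly of vertices of degree $\Delta$).

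The main obstacle, and the reason this conjecture has resisted proof for over half a century, is that the purely local constraints derivable from the adjacency lemma and its sharpenings are asymptotically too weak near the ratio $\tfrac{1}{2}$: in the neighborhood of each $\Delta$-vertex in $B$ they allow exactly the overcounting that produces Woodall's $\tfrac{3}{5}$. Closing the gap seems to demand a genuinely non-local coloring-theoretic ingredient---say a Tashkinov-tree or Kempe-chain argument that, starting from the assumption $|I|>n/2$, constructs an edge $\Delta$-coloring of $G$ directly, contradicting criticality. I expect the bulk of the real difficulty to lie in producing such a recoloring argument, robust enough to replace the adjacency-lemma counting in the discharging step above; absent that, the discharging scheme stalls at approximately the ratio $\tfrac{3}{5}$ no matter how cleverly the rules are calibrated.
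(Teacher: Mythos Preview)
The statement you are attempting to prove is not a theorem of the paper but Vizing's open conjecture; the paper does not prove it. What the paper establishes is Theorem~\ref{THM-main}, a partial result that requires the extra hypotheses $\delta(G)\ge d$ and $\Delta\ge (d+2)^{5d+10}$ and yields bounds strictly weaker than $n/2$ (namely $7n/12$, $4n/7$, or $\tfrac{d+2+\sqrt[3]{(d-1)d}}{2d+4+\sqrt[3]{(d-1)d}}\,n$ depending on $d$). So there is nothing to compare your proposal against: the paper contains no proof of Conjecture~\ref{ind}.

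Your write-up is not a proof either, and you say as much yourself. It is a correct high-level description of the discharging paradigm that underlies all known progress on the conjecture, together with an honest acknowledgment that the purely local adjacency constraints stall well above $n/2$. The paper's actual contribution follows exactly the template you describe: it sets up charges $M_0,M_1,M_2$ on $X$ and $Y=V(G)\setminus X$, pushes charge from $Y$ to $X$ via rules calibrated by degree, and uses sharpened adjacency information (Lemmas~\ref{LEM:largedegree}--\ref{bbcolor} from~\cite{avgdegree}, which go beyond Vizing's Adjacency Lemma by exploiting Kempe-chain/recoloring arguments) to verify the final charges. Even with those sharper lemmas the argument only reaches $(\tfrac12+\ve)n$ under strong degree hypotheses, confirming your diagnosis that a genuinely new non-local ingredient would be needed to hit $n/2$ unconditionally. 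In short: your proposal is a reasonable research outline, but it is not a proof, and neither is anything in the paper.
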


\begin{CON}[Vizing's 2-factor Conjecture~\cite{vizing-2factor}]\label{2-factor}
	Let $G$ be a $\Delta$-critical graph. Then $G$ contains a 2-factor;
	that is, a 2-regular subgraph $H$ of $G$ with $V(H)=V(G)$.
\end{CON}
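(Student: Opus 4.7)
The plan is to attack Conjecture~\ref{2-factor} through the Tutte--Belck $2$-factor theorem, which asserts that a graph $G$ has a $2$-factor if and only if for every pair of disjoint vertex sets $S,T\subseteq V(G)$ the deficiency
\[
\theta(S,T):=2|S|-2|T|+\sum_{v\in T}d_{G-S}(v)-q(S,T)
\]
is nonnegative, where $q(S,T)$ counts the components $C$ of $G-S-T$ for which $e_G(V(C),T)$ is odd. The aim is to show that the structural constraints $\Delta$-criticality imposes on $G$ preclude any pair $(S,T)$ with $\theta(S,T)<0$.

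The main structural input is Vizing's Adjacency Lemma (VAL) and its strengthenings. From VAL, every edge $uv\in E(G)$ satisfies $d(u)+d(v)\ge\Delta+2$, which forces $G$ to be $2$-edge-connected and forces the vertices of degree below $\Delta/2$ to be both few and spread out (two such vertices cannot be adjacent, and their neighborhoods must be saturated by $\Delta$-vertices). Refinements by Zhang, Sanders--Zhao, and Woodall sharpen these bounds further. I would feed these facts into a toughness-style argument: if $G$ can be shown tough enough, then a theorem of Enomoto--Jackson--Katerinis--Saito (toughness at least $1$ and minimum degree at least $2$ suffice for a $2$-factor, under mild parity conditions) settles the question immediately. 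Accordingly, the first concrete step is to upgrade $2$-edge-connectivity to $2$-connectivity---a cut-vertex $v$ splits $G$ into pieces, and analysis of the critical $\Delta$-coloring of $G-e$ restricted to each piece yields a contradiction with VAL---and then to a positive lower bound on toughness.

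From there I would take a hypothetical minimum counterexample $(S,T)$ to $\theta(S,T)\ge 0$. Each bad component $C$ of $G-S-T$ contributes at least one edge to $T$ (by oddness of $e_G(V(C),T)$), and because every low-degree vertex of $T$ has almost all of its neighbors of degree $\Delta$ by VAL, the incidences between $T$ and the bad components can be double-counted against $\sum_{v\in T}d_{G-S}(v)$. Combined with the toughness bound relating $q(S,T)$ to $|S|$, this should yield $\theta(S,T)\ge 0$ outside a narrow regime. Induction on $|S|+|T|$ (peeling off a vertex $v\in T$ whose removal from $T$ weakly improves $\theta$) would trim further subcases.

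The main obstacle, and the reason the conjecture has stood open since 1968, is precisely the narrow regime that resists the above analysis: $T$ large and of moderate degree, $S$ dominated by $\Delta$-vertices, and $G-S-T$ shattered into many tiny odd components in such a way that every ``easy'' counting inequality is tight. No purely degree-based argument is known to defeat this configuration. I expect the decisive input would have to be a Tashkinov-tree or multi-fan argument that converts a hypothetical bad pair $(S,T)$ into an obstruction to some edge $\Delta$-coloring of $G-e$, contradicting $\chi'(G-e)=\Delta$. Translating a factor-theoretic deficiency into a coloring-theoretic extension---identifying which edge $e$ and which fan at $e$ to use when only the shape of $(S,T)$ is known---is where the real difficulty lies, and is the crux I would need to resolve.
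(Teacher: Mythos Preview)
The statement you are attempting to prove is labeled in the paper as a \emph{conjecture} (environment \texttt{CON}), not a theorem; the paper does not prove it. Vizing's 2-factor Conjecture is stated there only as background alongside the Independence Number Conjecture and the Average Degree Conjecture, and the paper's actual results (Theorems~\ref{THM-main0} and~\ref{THM-main}) concern only the independence number. There is therefore no proof in the paper to compare your proposal against.

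Your write-up is not a proof but a research plan, and you already flag this yourself: the Tutte--Belck/toughness route stalls in exactly the ``narrow regime'' you describe, and the suggestion of converting a bad pair $(S,T)$ into a Tashkinov-tree obstruction is speculative (``is the crux I would need to resolve''). A few of the intermediate claims are also not established: $\Delta$-critical graphs are indeed known to be $2$-connected, but no toughness bound strong enough to invoke an Enomoto--Jackson--Katerinis--Saito-type theorem is known for them; if it were, the conjecture would already be settled. So as a proof the proposal has a genuine gap---the central step is missing---but that is precisely because the problem is open, not because you have overlooked something the paper supplies.
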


\begin{CON}[Vizing's Average Degree Conjecture~\cite{vizing-2factor}]\label{ave-deg-con}
	Let $G$ be an $n$-vertex $\Delta$-critical graph. Then the average degree of $G$ is at least $\Delta-1+\frac{n}{3}$. 
\end{CON}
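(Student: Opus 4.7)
The plan is to attack Vizing's Average Degree Conjecture through Vizing's Adjacency Lemma (VAL) combined with a discharging argument. Recall VAL: in a $\Delta$-critical graph $G$, every vertex $v$ has at least $\Delta - d(v) + 1$ neighbors of degree $\Delta$. Writing $V_\Delta$ for the vertices of maximum degree and $V_k$ for the vertices of degree $k$, double-counting edges between $V_\Delta$ and $V(G)\setminus V_\Delta$ yields
\[
\sum_{k=2}^{\Delta-1}(\Delta - k + 1)\,|V_k| \;\le\; \Delta\,|V_\Delta|,
\]
which already forces $|V_\Delta|$ to be a substantial fraction of $n$. The target bound $\Delta - 1 + n/3$ on the average degree translates, after summing degrees, into the inequality $2|E(G)| \ge n(\Delta - 1) + n^2/3$, which is what must ultimately be extracted.

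First I would collect the sharper adjacency-type results available: Zhang's lemma controlling pairs of adjacent low-degree vertices, the Kostochka--Stiebitz extensions, and the more recent multi-fan and Tashkinov-tree structural bounds. These give refined information about $\Delta$-neighborhoods of low-degree vertices and limit how low-degree vertices can cluster. Next I would set up discharging: assign initial charge $d(v)$ to each vertex, and design rules by which $\Delta$-vertices push excess charge to low-degree neighbors (or vice versa), aiming for final charge at least $\Delta - 1 + n/3$ at every vertex. A natural rule is that each vertex of degree $k < \Delta$ pulls a uniform share $\tfrac{\Delta - k + 1}{k}$ from each of its $\Delta$-neighbors, calibrated so that the VAL-guaranteed count of $\Delta$-neighbors exactly covers the deficit $\Delta - k$; the remaining slack is used to absorb the $n/3$ global term. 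Summing the rules globally should reduce the conjecture to a single linear inequality in $(|V_2|,\ldots,|V_\Delta|)$ which, combined with VAL, is then verifiable.

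The main obstacle is the quadratic term $n^2/3$: when $\Delta$ is much smaller than $n$, the bound demands average degree growing linearly in $n$, which is far beyond what VAL (whose output scales with $\Delta$, not $n$) can supply locally. The argument must therefore exploit global structure --- for instance, by iterating the adjacency lemma to depth growing with $n$ starting at $V_\Delta$, or by invoking the Tashkinov-tree technique to produce long augmenting structures whose absence forces $|E(G)|$ to be large. The sharpest regime is $\Delta$ of order roughly $\sqrt{n}$, where neither a purely local VAL-based count nor a purely global fan argument is individually strong enough; reconciling the two is where I expect the real work to lie, and it is the reason that only weakened forms (with $n/3$ replaced by smaller quantities, or with additional hypotheses such as a lower bound on minimum degree in the style of the paper's main theorem) are currently known.
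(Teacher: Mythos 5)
The statement you were asked about is Conjecture~\ref{ave-deg-con}, an open conjecture of Vizing; the paper does not prove it (and does not claim to), so there is no proof to compare yours against. What you have written is a research plan, not a proof, and you say as much yourself at the end (``only weakened forms \ldots are currently known''). A sketch that ends by identifying where ``the real work'' would lie does not establish the statement.

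There is also a concrete mathematical problem you should have caught before building a strategy around it. As printed, the conjecture asserts average degree at least $\Delta-1+\frac{n}{3}$. Since the average degree of any simple graph is at most $\Delta$, this would force $\frac{n}{3}\le 1$, i.e.\ $n\le 3$, so the literal statement is false for every $\Delta$-critical graph on more than three vertices. The intended statement (Vizing's actual conjecture) is that the average degree is at least $\Delta-1+\frac{3}{n}$, equivalently $|E(G)|\ge\frac{1}{2}\left(n(\Delta-1)+3\right)$. Your translation $2|E(G)|\ge n(\Delta-1)+\frac{n^2}{3}$ faithfully reproduces the typo, and your entire third paragraph --- the ``quadratic term $n^2/3$,'' the claim that the hard regime is $\Delta\approx\sqrt{n}$, the need for depth-$n$ iteration of the adjacency lemma --- is an attempt to overcome an obstacle that exists only because of that typo. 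With the correct constant $3/n$ the difficulty is entirely different (it is a delicate additive $+3$ in the edge count, not a quadratic term), and the known partial results (e.g.\ lower bounds of the form $\frac{1}{2}n(\Delta-1)$ plus lower-order corrections) are obtained by exactly the VAL-plus-discharging framework you describe in your second paragraph. So: correct general toolkit, but no actual argument, and the analysis of where the difficulty lies is based on a misreading of the statement.
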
 

The above conjectures do not hold for edge-chromatic critical class 1 graphs such as 
a length 2 path. Also, they do not hold for class 2 graphs which are not $\Delta$-critical. Partial results have been obtained for each of these conjectures. In this paper, we investigate 
Vizing's Independence Number Conjecture. This conjecture was confirmed for special graph classes including graphs with many edges such as overfull graphs by Gr{\"u}newald and Steffen~\cite{vizing-2-factor-overful}, and  $n$-vertex $\Delta$-critical graphs $G$ with $\Delta\ge \frac{n}{2}$ by Luo and Zhao~\cite{ind2}.  Let $G$ be an $n$-vertex
$\Delta$-critical graph.
Brinkmann et al.~\cite{ind1}, in 2000,
proved that  $\alpha(G)<2n/3$; and
the upper bound is further improved when
the maximum degree is between 3 to 10. Luo and Zhao~\cite{ind2}, in 2008, by improving the result of
Brinkmann et al., showed that 
$\alpha(G)<(5\Delta-6)n/(8\Delta-6)<5n/8$  when $\Delta\ge6$.
In 2009,
Woodall~\cite{ind3} further improved the upper bound of $\alpha(G)$ to
$3n/5$. In this paper, by using new adjacency lemmas, we obtain the following results. 

\begin{THM}\label{THM-main0}
For any given $\ve\in (0,1)$, there exist positive constants $d_0(\ve)$ and $D_0(\ve)$ such that if $G$ is an $n$-vertex 
$\Delta$-critical graph with minimum degree at least $d_0$ and maximum degree at least $D_0$,
then $\alpha(G)<(\frac{{1}}{2}+\ve)n$. 
\end{THM}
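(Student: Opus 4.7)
The plan is to take a maximum independent set $I$ of $G$, set $J=V(G)\setminus I$, and bound $|I|/n$ via a refined double count of edges. Because $I$ is independent, every edge incident to $I$ lies in $E(I,J)$, and the basic identity
\[
\sum_{u\in J}d(u)\;=\;\sum_{v\in I}d(v)\;+\;2\,e(G[J])
\]
says that driving $|I|/n$ close to $1/2$ requires a strong lower bound on $e(G[J])$, i.e.\ on how many edges $J$ carries internally. The earlier bounds of Brinkmann et al.\ ($2n/3$), Luo--Zhao ($(5\Delta-6)n/(8\Delta-6)$), and Woodall ($3n/5$) all come from this identity combined with the classical Vizing adjacency lemma (VAL), and those arguments essentially saturate VAL; hence the improvement must come from a genuinely new adjacency input tailored to the regime $\delta\ge d_0$, $\Delta\ge D_0$.

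The first and hardest step is to prove such a refined Vizing-type adjacency lemma. Classical VAL says that an edge $uv$ with $d(v)=k$ forces at least $\Delta-k+1$ vertices of $N(u)\setminus\{v\}$ to have degree $\Delta$. My intended upgrade is to analyse a Kierstead path, or more generally a Tashkinov tree, extracted from an uncolourable edge of $G-e$: pushing the tree to depth two or three should force second-order structure on $N(u)\cup N(v)$, in particular many forced common neighbours among the high-degree vertices of $N(u)$. Because $I$ is independent, two common neighbours of a single vertex cannot both lie in $I$, so these forced adjacencies feed directly into $E(G[J])$. The target is a quantitative statement asserting that for a $(1-\varepsilon)$-fraction of $u\in J$ with $d(u)\ge d_0$, a large portion of $N(u)$ must land in $J$; the cube-root terms $\sqrt[3]{(d-1)d}$ in the quantitative corollary hint that the right lemma comes from a depth-three fan/Kierstead argument whose optimisation naturally produces an exponent of $2/3$.

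Granting the refined lemma, the counting step follows the Woodall scheme with the new input plugged in. The goal is to bound $\sum_{v\in I}d(v)=e(I,J)$ from above by essentially $\tfrac12\sum_{u\in J}d(u)$; combined with the basic identity above, the trivial bounds $\sum_{v\in I}d(v)\ge\delta|I|$ and $\sum_{u\in J}d(u)\le\Delta|J|$, and an analogous bound worked out from the $I$-side using VAL to tame the asymmetry between $\delta$ and $\Delta$, this yields $|I|<(\tfrac12+\varepsilon)n$ once $D_0$ is chosen large enough to absorb the lower-order slack. The explicit rational bounds in the quantitative corollary then emerge by optimising the same chain of inequalities individually for each fixed value of $\delta=d$.

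The dominant obstacle is the refined adjacency lemma itself. Classical VAL already saturates Woodall's analysis at $3/5$, so the improvement must reach genuinely higher-order structure, and in the Vizing/Kierstead framework this typically requires intricate case analysis by colour class and local degree profile along the fan or tree, with many exceptional configurations to rule out. A secondary but related obstacle is parameter bookkeeping: keeping the threshold $D_0$ bounded by an explicit function of $d_0$ and $\varepsilon$ requires careful tracking of where each appeal to the new lemma demands a large $\Delta$, and this is plausibly the source of the large bound $\Delta\ge(d+2)^{5d+10}$ appearing in the quantitative statement.
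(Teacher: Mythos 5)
Your proposal correctly identifies the overall shape of the argument --- a partition into a maximum independent set $X$ and its complement $Y$, a weighted edge/charge count between the two sides, and, crucially, a Vizing-type adjacency lemma upgraded to control \emph{second} neighbourhoods --- and this does match the paper's strategy at that level of resolution. The paper's counting is phrased as a discharging argument (each $y\in Y$ starts with $(d+2+\omega)\Delta$, each $x\in X$ must end with $(d+2)\Delta$, yielding $|X|<\frac{d+2+\omega}{2d+4+\omega}n$), which is a repackaging of the double count you describe, and the new adjacency input is exactly of the kind you anticipate: for an edge $xy$ with $d(x)$ small and $\varphi\in\CC^\Delta(G-xy)$, all but a bounded number of the neighbours $z\in N(x,M_\varphi)$ have almost all of $N(z)$ of degree at least $q=(1-\ve)\Delta$, and the ``bad'' colour sets of distinct such $z$ are pairwise disjoint.

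Nevertheless there is a genuine gap: the refined adjacency lemma, which you yourself identify as the dominant obstacle and which carries essentially all of the content of the theorem, is never stated precisely and never proved. ``Pushing a Kierstead path or Tashkinov tree to depth two or three should force second-order structure'' is a research direction, not an argument; the paper's Lemmas~\ref{LEM:largedegree}--\ref{LEM:degree3} (imported from the authors' average-degree paper) and the new Lemma~\ref{bbcolor} are delicate quantitative statements about the sets $A_\varphi$, $B_\varphi$, $M_\varphi$ and the missing-colour structure of $G-xy$, established by explicit Kempe-chain manipulations, and nothing in your sketch produces them. Moreover, the mechanism by which you propose to feed the lemma into the count is wrong as stated: independence of $I$ does \emph{not} prevent two common neighbours of a single vertex from both lying in $I$ (it only prevents two \emph{adjacent} vertices from doing so). The paper's actual mechanism is different: the second-neighbourhood lemmas show that each neighbour $y$ of a low-degree $x\in X$ has few low-degree neighbours in total, so $y$'s surplus charge is split among few recipients and $x$ receives a large share (Claims~\ref{Claim5}--\ref{Claim8}, which require their own optimisation via Cauchy--Schwarz and AM--GM and a separate case analysis for small $d(x)$ using Corollary~\ref{bbcolor2}). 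As it stands, your proposal locates the difficulty correctly but does not overcome it.
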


Theorem~\ref{THM-main0} is implied by the following result. 
\begin{THM}\label{THM-main}
	Let 
	 $G$ be an $n$-vertex $\Delta$-critical graph with minimum degree at least $d$
	 and $\Delta\ge (d+2)^{5d+10}$. Then 
	\[
	\alpha(G) < \left.
	\begin{cases}
	\frac{7n}{12}, & \text{if $d=3$; }  \\
	\frac{4n}{7}, & \text{if $d=4$; } \\
	\frac{d+2+\sqrt[3]{(d-1)d}}{2d+4+\sqrt[3]{(d-1)d}}n, & \text{if $ d\ge 19$. }
	\end{cases}
	\right.
	\]
\end{THM}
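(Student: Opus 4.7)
The plan is to follow the standard framework for independence bounds in $\Delta$-critical graphs: fix a maximum independent set $I$ with $|I|=\alpha(G)$, let $\overline{I}:=V(G)\setminus I$, and double-count the edge cut $[I,\overline{I}]$, using adjacency lemmas to force $|\overline{I}|$ to be large relative to $|I|$. What is new, as the introduction hints, is a family of strengthened adjacency lemmas that control not just one layer of the neighborhood of a low-degree vertex (as in Vizing's Adjacency Lemma) but two or three nested layers, which is what makes a cube-root coefficient $\sqrt[3]{(d-1)d}$ plausible.

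First I would develop the coloring machinery: multi-fans and Kempe-chain swaps applied to an optimal edge $\Delta$-coloring of $G-e$ for a carefully chosen edge $e=uv$ with $d(u)$ small. The target is a strengthened adjacency lemma stating, roughly, that if $d(u)$ is close to $d$ then $v$ has many neighbors of degree $\Delta$, each such neighbor has a controlled interaction with $N(u)\cup N(v)$, and a further third-level statement on how second-neighbors of $u$ behave. The hypothesis $\Delta \ge (d+2)^{5d+10}$ enters precisely here: each nested fan/chain argument needs enough free colors to avoid collisions, and a polynomial-in-$d$ tower of colors is consumed at each layer.

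With these lemmas in hand I would split $\overline{I}$ into tiers by degree: a ``low'' tier $L$ of vertices of degree close to $d$, a ``high'' tier $H$ of vertices of degree close to $\Delta$, and a middle tier. Two-way counting of $e(I,\overline{I})$ gives
\begin{equation}
\sum_{v\in I} d(v) \;=\; \sum_{x\in \overline{I}} |N(x)\cap I|,
\end{equation}
while the adjacency lemmas yield inequalities of the form $|H|\ge c_1|L|$ together with an upper bound on the typical size of $N(h)\cap I$ for $h\in H$. Assigning weights tuned to each tier and optimizing the resulting program in the ratios $|L|/|I|$ and $|H|/|I|$ should produce the closed-form fraction $\tfrac{d+2+\sqrt[3]{(d-1)d}}{2d+4+\sqrt[3]{(d-1)d}}$ as the root of a cubic arising from the KKT conditions; the three separate cases $d=3$, $d=4$, $d\ge 19$ would correspond to regimes in which different constraints are active, the small-degree cases needing a slightly different balance because the generic formula is either unavailable or weaker than $\tfrac{7}{12}$ and $\tfrac{4}{7}$.

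The main obstacle will be the second- and third-layer adjacency lemmas themselves. Single-step fan arguments are robust, but as soon as one recolors simultaneously near two distinct low-degree vertices, the Kempe chains swapped to resolve a conflict at $u$ can disturb the fan rooted at a neighbor of $u$, and the bookkeeping of missing colors at several vertices at once becomes delicate. Managing these interactions while staying within the color budget afforded by $\Delta\ge(d+2)^{5d+10}$ is the heart of the proof; once the lemmas are in place, the counting and optimization should be a finite, essentially algebraic exercise that delivers the three stated bounds.
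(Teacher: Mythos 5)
Your high-level framework --- strengthened adjacency lemmas reaching into second neighborhoods, feeding a degree-tiered counting argument, with the hypothesis $\Delta\ge(d+2)^{5d+10}$ consumed inside the coloring lemmas --- does match the paper's strategy in outline, but the proposal both leaves the entire substance unproved and mis-predicts the structure in two ways. First, the tiers in the actual proof live on the independent set $X$, not on its complement: $X$ is partitioned into $X^{++},X^{+},X_1^{-},X_2^{-},X_3^{-}$ by degree thresholds $\Delta$, $q=(1-\ve)\Delta$, $\ve\Delta$ and $3d-3$, and the argument is a discharging in which each $y\in Y$ starts with $(d+2+\omega)\Delta$ and must end up able to pay $(d+2)\Delta$ in total to each $X$-neighbor; there is no global inequality of the form $|H|\ge c_1|L|$ between tiers of $\overline{I}$. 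The engine is entirely local: if $y\in Y$ has a neighbor $x\in X$ of low degree $k$, then Vizing's Adjacency Lemma plus the new lemmas force all but about $k-1-p$ of $y$'s neighbors, and almost all second neighbors of $x$ reached through the fan, to have degree at least $q$, so $y$ has few ``poor'' $X$-neighbors and can afford to pay each of them a lot. Second, the cube root is not the root of a cubic coming from KKT conditions of an optimization over tier ratios: $\omega=\sqrt[3]{(d-1)d}$ is fixed in advance, and the exponent $1/3$ appears because the key estimate for $x\in X_2^-$ splits the received charge into three terms and applies a three-term AM--GM, yielding the condition $3\sqrt[3]{k^2/(4(k-1))}\ge 1+(d+2)/\omega$ whose worst case $k=d$ is what pins down $\omega$.

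The more serious issue is that you explicitly defer the multi-layer adjacency lemmas (``the main obstacle will be the \dots lemmas themselves'') and give no indication of how to obtain them; these are the entire content of the result. The paper imports three such statements from a companion paper (controlling $\sigma_q(x,z)$ for all but at most $N$ fan vertices $z$, and showing that the ``bad'' color sets $\pb(z)$ of distinct fan vertices are pairwise disjoint and contained in $B_\varphi(q)$) and proves one more (Lemma~\ref{bbcolor}) by a delicate sequence of Kempe-chain swaps that must preserve $M_\varphi$, $B_\varphi$ and $B_w(\varphi)$. Without these precise statements one cannot even set up the ``finite, essentially algebraic exercise'' you describe: for instance, the disjointness of the $\pb(z_i)$ is exactly what licenses $\sum_i b_i\le p$ and the Cauchy--Schwarz step $\sum_i (b_i+1)^{-1}\ge (k-p-1)^2/(k-1)$, and the exceptional cases $(k,p)\in\{(4,1),(5,2),(6,3),(7,3)\}$ for $d=4$ require Corollary~\ref{bbcolor2} rather than the generic bound. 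As written, the proposal is a plausible research plan rather than a proof.
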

When $d\ge 19$, $\frac{d+2+\sqrt[3]{(d-1)d}}{2d+4+\sqrt[3]{(d-1)d}}<\frac{4}{7}$.
In fact, we suspect the following might be true.

\begin{CON}
		Let  $d\ge 2$ be a positive integer. Then there exists a constant $D_0$ depending only on $d$ such that if
		$G$ is an $n$-vertex $\Delta$-critical graph  with $\delta(G)\ge d$ and $\Delta \ge D_0$,  then 
		$$
		\alpha(G)<\frac{d+4}{2d+6}n.
		$$
\end{CON}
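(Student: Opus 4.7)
The plan is to extend the weighted counting framework behind Theorem~\ref{THM-main} to all $d\ge 2$ simultaneously. Let $I$ be a maximum independent set of $G$ and set $X=V(G)\setminus I$. The target inequality $\alpha(G)<\frac{d+4}{2d+6}n$ rearranges to $(d+2)|I|<(d+4)|X|$, so it suffices to produce a nonnegative weighting $w$ on the edges between $I$ and $X$ with $\sum_{e\ni u}w(e)\ge d+2$ for every $u\in I$ and $\sum_{e\ni v}w(e)\le d+4$ for every $v\in X$. All of the structure is to be extracted from the local behavior of a $\Delta$-critical graph around a small-degree vertex.

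The first step is to sharpen the adjacency lemmas developed for Theorem~\ref{THM-main}. Vizing's classical adjacency lemma and its Zhang-type refinements already force every neighbor of a low-degree vertex $u$ to have many $\Delta$-neighbors; what is needed in the intermediate regime is a finer lemma controlling the joint behavior of several low-degree vertices inside the second neighborhood of $u$, so that one can simultaneously upper bound the number of ``weight-short'' neighbors of each $u\in I$. Assuming $\Delta\ge (d+2)^{cd}$ for a suitable constant $c=c(d)$, such lemmas appear reachable by the Kempe-chain and Tashkinov-tree techniques already in use in the paper.

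The second step is to design the weighting. A vertex $v\in X$ of degree $\Delta$ sends weight $1$ along each of its edges into $I$, leaving a surplus of $\Delta-|N(v)\cap I|$ that can be redistributed; a vertex $v\in X$ of degree $d_v<\Delta$ sends a correspondingly smaller weight per edge, but by the new adjacency lemmas its $I$-neighbors have enough $\Delta$-neighbors elsewhere to absorb the deficit. A global discharging step then balances the weights so that every $u\in I$ accumulates at least $d+2$ while no $v\in X$ exceeds $d+4$. The extremal configuration in the limit $\Delta\to\infty$ is a $d$-regular bipartite-like structure between $I$ and a low-degree subset of $X$, and the adjacency lemmas should rule this out once $\Delta$ is large enough, forcing strict inequality.

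The main obstacle lies in the middle range $5\le d\le 18$, where the adjacency lemmas currently available are not quite tight enough to reach $\frac{d+2}{d+4}$; this is precisely why Theorem~\ref{THM-main} is forced to use the weaker expression $\sqrt[3]{(d-1)d}$ in place of $d$ in the denominator. Closing the gap will likely require either a genuinely new adjacency lemma exploiting the interaction of two low-degree edges sharing a common endpoint, or a probabilistic deletion argument that removes a sparse ``bad'' subset of $I$ and applies a Luo--Zhao-style count on the remainder. The endpoint case $d=2$ coincides with Woodall's $\tfrac{3n}{5}$ bound and should follow from a direct adaptation of his argument in the regime $\Delta\gg d$, so the hard work is concentrated on the $5\le d\le 18$ window.
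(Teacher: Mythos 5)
The statement you are addressing is stated in the paper only as a conjecture (``we suspect the following might be true''), and the paper does not prove it; it confirms only the cases $d=2$ (via Woodall's bound $3n/5=\tfrac{d+4}{2d+6}n$ for $d=2$) and $d=3,4$ (via Theorem~\ref{THM-main}). Your proposal is a research program rather than a proof, and it has a genuine gap at its core: the entire argument hinges on ``sharpened adjacency lemmas'' in step one and a discharging scheme in step two that delivers at least $d+2$ to every vertex of the independent set while charging at most $d+4$ to every vertex outside it, but neither is actually constructed. In the paper's own framework this target corresponds to taking $\omega=2$ in the bound $\tfrac{d+2+\omega}{2d+4+\omega}n$, and the paper can only achieve $\omega=2$ for $d\le 4$; for $d\ge 19$ it must settle for $\omega=\sqrt[3]{(d-1)d}$, precisely because the charge available per $Y$-vertex, $\omega\Delta$, must cover the deficit $(d+2)(\Delta-k)$ of low-degree neighbors, and the counting in Claims~\ref{Claim6}--\ref{Claim8} (driven by Lemmas~\ref{LEM:largedegree}--\ref{LEM:degree3}) does not close with $\omega=2$ once $d\ge 5$. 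You acknowledge this yourself when you write that the available lemmas ``are not quite tight enough'' for $5\le d\le 18$ and that closing the gap ``will likely require'' a new adjacency lemma or a probabilistic deletion argument; that concession is an admission that the proof does not exist yet, not a step in one.

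Two smaller points. First, your reduction of the bound to the weighting inequality $(d+2)|I|<(d+4)|X|$ is correct and matches the paper's discharging setup (Claim~\ref{Claim2}, after rescaling by $\Delta$), so the framework is the right one. Second, even for $d\ge 19$, where the paper does prove something, your proposal does not explain how to improve $\omega$ from $\sqrt[3]{(d-1)d}$ down to $2$; the assertion that the extremal ``$d$-regular bipartite-like structure'' can be ruled out for large $\Delta$ is exactly the open content of the conjecture. As written, the proposal cannot be accepted as a proof of the statement.
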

The case for $d=2$ was confirmed by   Woodall's result~\cite{ind3}, and the cases for $d=3,4$ are covered in Theorem~\ref{THM-main}. 

The remainder of the paper is organized as follows. We introduce some edge-coloring notation and technique lemmas in Section 2, and we prove Theorem~\ref{THM-main} in Section 3. 

\section{Technique Lemmas}

In this  section, we list the classic Vizing's Adjacency Lemma and some new developed 
adjacency lemmas which will be used for proving Theorem~\ref{THM-main}. 

\begin{LEM}[Vizing's Adjacency Lemma]\label{vizing adjacency lemma}
	If $G$ is a $\Delta$-critical graph, then for any edge
	$xy\in E(G)$, $y$ is adjacent to at least $\Delta-d(x)+1$ vertices $z$ of degree  $\Delta$
	 with $z\ne x$.
\end{LEM}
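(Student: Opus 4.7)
The plan is to use Vizing's classical missing-color counting combined with a Kempe-chain swap. Since $G$ is $\Delta$-critical, $G-xy$ admits a proper $\Delta$-edge-coloring $\varphi$; for each vertex $v$, write $\overline{\varphi}(v)\subseteq[1,\Delta]$ for the set of colors missing at $v$ under $\varphi$. The non-extendability of $\varphi$ to $xy$ (ensured by $\chi'(G)=\Delta+1$) forces $\overline{\varphi}(x)\cap\overline{\varphi}(y)=\emptyset$, and since $d_{G-xy}(x)=d(x)-1$, we have $|\overline{\varphi}(x)|\ge\Delta-d(x)+1$.

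First, I would produce the required neighbors of $y$. For every $\alpha\in\overline{\varphi}(x)$, color $\alpha$ must appear at some edge incident with $y$, for otherwise $\alpha$ would lie in $\overline{\varphi}(y)\cap\overline{\varphi}(x)$, contradicting the disjointness above. Let $z_\alpha\ne x$ be the unique neighbor of $y$ with $\varphi(yz_\alpha)=\alpha$. Because distinct colors give distinct edges at $y$, the vertices $\{z_\alpha:\alpha\in\overline{\varphi}(x)\}$ are pairwise distinct, producing at least $\Delta-d(x)+1$ candidates in $N(y)\setminus\{x\}$.

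The heart of the proof is to show each such $z_\alpha$ has degree $\Delta$. Suppose for contradiction that $d(z_\alpha)<\Delta$, and pick $\beta\in\overline{\varphi}(z_\alpha)$, noting $\beta\ne\alpha$ since $\alpha\in\varphi(z_\alpha)$. I would then build a Vizing fan at $y$ rooted at $x$ with respect to the uncolored edge $xy$ — a maximal sequence $(x_0=x,x_1,x_2,\ldots,x_k)$ of distinct neighbors of $y$ with $\varphi(yx_i)\in\overline{\varphi}(x_{i-1})$ for $i\ge 1$, chosen so that $x_1=z_\alpha$ — and combine it with an $(\alpha,\beta)$-Kempe-chain swap. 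By the standard fan-rotation argument, whenever $\overline{\varphi}(x_i)\cap\overline{\varphi}(y)\ne\emptyset$, one can shift colors along the fan and extend $\varphi$ to $xy$, contradicting $\chi'(G)=\Delta+1$. Swapping the $(\alpha,\beta)$-Kempe chain through $x_i$, for an appropriate $\alpha\in\overline{\varphi}(y)$ and $\beta\in\overline{\varphi}(x_i)$, creates exactly such a configuration, contradicting the assumption $d(z_\alpha)<\Delta$ and completing the proof.

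The main obstacle is the case analysis of how the Kempe chain interacts with $y$ and the fan vertices, and in particular verifying that the chain cannot pass through $y$ in a way that defeats the post-swap fan rotation. This is handled via the standard endpoint characterization of $(\alpha,\beta)$-chains — the endpoints are precisely the vertices missing exactly one of the two colors — combined with the disjointness $\overline{\varphi}(y)\cap\overline{\varphi}(x_i)=\emptyset$ forced by fan maximality. These constraints restrict the possible chain configurations to a small number of subcases, each resolved by an appropriate additional recoloring. Carrying out this case analysis is precisely the content of Vizing's original proof of the adjacency lemma.
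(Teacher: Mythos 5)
Your opening moves are sound: $\overline{\varphi}(x)\cap\overline{\varphi}(y)=\emptyset$, $|\overline{\varphi}(x)|\ge\Delta-d(x)+1$, and each $\alpha\in\overline{\varphi}(x)$ yields a distinct $z_\alpha\in N(y)\setminus\{x\}$ with $\varphi(yz_\alpha)=\alpha$. But the statement you then try to prove --- that \emph{every} such $z_\alpha$ has degree $\Delta$ --- is false, so no case analysis of Kempe chains can establish it. Take $K_4$ on $\{a,b,c,d\}$ with the edge $cd$ subdivided by a new vertex $v$; this graph is $3$-critical and $d(v)=2$. With $x=b$, $y=c$, color $G-bc$ by $\varphi(ab)=1$, $\varphi(ac)=2$, $\varphi(ad)=3$, $\varphi(bd)=2$, $\varphi(dv)=1$, $\varphi(cv)=3$. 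Then $\overline{\varphi}(b)=\{3\}$ and $\varphi(cv)=3$, so your $z_3$ is the degree-$2$ vertex $v$. (Indeed, in any $3$-coloring of $G-bc$ the colors $\varphi(ab),\varphi(ad),\varphi(bd)$ are pairwise distinct, so the color missing at $b$ is $\varphi(ad)\ne\varphi(ac)$ and must appear on $cv$; the failure is unavoidable.) Nothing is inconsistent here with the genuine local constraints: $\overline{\varphi}(b),\overline{\varphi}(c),\overline{\varphi}(v)$ are pairwise disjoint and $d(b)+d(v)=\Delta+2$. That elementarity bound $d(x)+d(z_\alpha)\ge\Delta+2$ is all the two-vertex fan gives you, and it does not force $d(z_\alpha)=\Delta$.

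The standard proof (the paper cites the lemma as classical and gives none) instead takes a \emph{maximal} multifan at $y$ rooted at $x$: distinct neighbors $x=z_0,z_1,\dots,z_p$ of $y$ with each $\varphi(yz_i)\in\overline{\varphi}(z_j)$ for some $j<i$. One shows the sets $\overline{\varphi}(y),\overline{\varphi}(z_0),\dots,\overline{\varphi}(z_p)$ are pairwise disjoint and that, by maximality, every color missing at some $z_j$ equals $\varphi(yz_i)$ for some $i\ge 1$; hence $\sum_{j=0}^{p}|\overline{\varphi}(z_j)|\le p$. Since $|\overline{\varphi}(z_0)|=\Delta-d(x)+1$ and $|\overline{\varphi}(z_j)|=\Delta-d(z_j)$ for $j\ge1$, at most $p-(\Delta-d(x)+1)$ of $z_1,\dots,z_p$ have degree below $\Delta$, so at least $\Delta-d(x)+1$ have degree exactly $\Delta$. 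This is a counting statement over the whole fan --- in the example above the fan is $(b,v,a)$ and the guaranteed degree-$3$ neighbor is $a$, not $v$ --- and it cannot be localized to the vertices $z_\alpha$ you singled out. Separately, even where your outline aims at something true, the decisive step is only asserted (``the standard fan-rotation argument,'' ``precisely the content of Vizing's original proof'') rather than carried out, so the argument would be incomplete even if its target claim were correct.
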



Let $G$ be a $\Delta$-critical graph, $x\in V(G)$, $y,z\in N(x)$, and  $\varphi \in \CC^\Delta (G-xy)$. 
For any $v\in V(G)$, the set of colors present at $v$ is 
$\varphi(v)=\{\varphi(e)\,:\, \text{$e$ is incident to $v$}\}$, and the set of colors missing at $v$ is $\pbar(v)=[1,\Delta]-\varphi(v)$, where $[1,\Delta]=\{1,2,\cdots, \Delta\}$.  
Note that by the $\Delta$-criticality of $G$, $\pbar(x)\cap \pbar(y)=\emptyset$. Let $q$ be a positive integer.  
Define
\begin{eqnarray*}
\sigma_q(x,z)&=&|\{u\in N(z)-\{x\}\,:\, d(u)\ge q\}|,\\
A_\varphi(x,y,q)&=&\{\alpha\in \varphi(x)\,:\, \text{$\exists$ $u\in N(y)$ such that $\varphi(yu)=\alpha$ and $d(u)<q$}\},\\
B_\varphi(x,y,q)&=&\{\beta\in \varphi(x)\,:\, \text{$\exists$ $u\in N(y)$ such that $\varphi(yu)=\beta$ and $d(u)\ge q$}\},\\
M_\varphi(x,y,q)&=&A_\varphi(x,y,q)\cup \pbar(x)\cup \pbar(y),\\
N(x, M_\varphi(x,y,q))&=&\{z\in N(x)\,:\, \varphi(xz)\in M_\varphi(x,y,q)\},\\
N(x, B_\varphi(x,y,q))&=&\{z\in N(x)\,:\, \varphi(xz)\in B_\varphi(x,y,q)\}.
\end{eqnarray*}
We simply write $A_\varphi(q), B_\varphi(q)$ and $M_\varphi(q)$ if $xy$ is specified and clear. By the definitions, we have that 
\begin{eqnarray*}
A_\varphi(x,y,q)\cup B_\varphi(x,y,q)&=& \varphi(x)\cap \varphi(y),\\
N(x, M_\varphi)\cup N(x, B_\varphi) \cup \{y\}&=& N(x).
\end{eqnarray*}	
For $v\in V(G)$, let 
$$
\pb(v)=\{\alpha \in [1, \Delta]\,:\, \text{either $\alpha\in \pbar(v)$ or $\exists$  $v'\in N(v)$ so that $\varphi(vv')=\alpha$ and $d(v')<q$}\}.
$$
For any $\ve, \lambda\in (0,1)$, define 
\begin{eqnarray}
c_0&=&\lceil \frac{1-\ve}{\ve}\rceil \label{c0},\\
f(\ve)&=&\frac{3c_0^5+19c_0^4+34c_0^3+27c_0^2+11c_0+2}{\ve}, \label{fepsilon}\\
N&=&(c_0+2)(\frac{1}{\lambda}+1)^{3c_0+2} \label{N}, \\
D_0&=&\max\{f(\ve), \frac{3c_0+2}{\lambda^2}, \frac{N+1}{\ve^3}\}. \label{D0}
\end{eqnarray}
%
\begin{LEM}[Corollary 10, \cite{avgdegree}]\label{LEM:largedegree}
	Let $\ve,\lambda\in (0,1)$,  $\Delta$ be a real number with $\Delta\ge D_0$, and 
	let $q=(1-\ve)\Delta$. 
If $G$ is a $\Delta$-critical graph, $xy\in E(G)$ with $d(x)<q$, and $\varphi\in \CC^\Delta(G-xy)$,  then except at most $N$
vertices in $N(x, M_\varphi)$, for each other remaining vertex $z\in N(x, M_\varphi)$,
$$
|\pb(z)-\{\varphi(xz)\}|<\lambda \Delta.
$$

\end{LEM}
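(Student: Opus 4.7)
The plan is to prove this lemma by contradiction using an iterated Vizing fan / Kierstead path at $(y,x,\cdot)$. Suppose more than $N$ vertices $z\in N(x,M_\varphi)$ satisfy $|\pb(z)-\{\varphi(xz)\}|\ge \lambda\Delta$. Each such $z$ is "bad" in one of two concrete senses: either $d(z)$ is small (so many colors are missing at $z$), or $z$ has many neighbors of degree $<q$ (so many edges incident to $z$ carry colors in $\pb(z)$). In both cases there are at least $\lambda\Delta$ candidate colors $\alpha\in\pb(z)-\{\varphi(xz)\}$ available to extend a fan rooted at the seed $(y,x,z)$.

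For the seed itself, note that $z\in N(x,M_\varphi)$ means $\varphi(xz)$ either misses $y$ or is shared with a low-degree neighbor of $y$; in both cases $(y,x,z)$ is the start of a Kierstead path, exactly the configuration Vizing's Adjacency Lemma controls. First I would use each bad $z$ to produce an extension of this path by choosing one of its $\geq \lambda\Delta$ bad colors, identifying the corresponding low-degree witness vertex (either a small-degree neighbor of $z$, or, for missing colors, the far end of a Kempe chain). Then I would iterate this construction to depth $3c_0+2$, with $c_0=\lceil(1-\ve)/\ve\rceil$ chosen so that within a Kierstead path of length at most $c_0+2$ a Kempe rotation necessarily terminates at a color missing from $x$ (this is the role of the hypothesis $d(x)<q=(1-\ve)\Delta$). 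The branching factor at each of the $3c_0+2$ levels is at most $1/\lambda+1$, and at each level we have up to $c_0+2$ possible anchor vertices, giving at most $(c_0+2)(1/\lambda+1)^{3c_0+2}=N$ combinatorially distinct extended fans.

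If the number of bad $z$'s strictly exceeds $N$, a pigeonhole argument then produces either (i) two seeds whose extended fans share enough colored structure that a joint Kempe rotation closes the fan on a missing color of $x$, or (ii) a single extended fan long enough that the terminal Kempe chain necessarily returns to $x$ via a missing color. In either case we construct $\varphi'\in\CC^\Delta(G)$ by shifting colors along the fan and coloring $xy$, contradicting the $\Delta$-criticality of $G$.

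The main obstacle will be ensuring non-interference between the Kempe chains used at different levels of the iterated fan: a chain swapped at a deep level must not destroy a missing-color anchor established at a shallower level. The fix is to prune the set of admissible "bad" colors at each step so that all colors already committed to earlier levels remain inert; provided the pruning removes only $O(1)$ colors per level, we still retain $\geq \lambda\Delta$ admissible choices and the branching count $N$ is tight. Verifying this pruning bound, together with checking that the threshold $\Delta\ge D_0$ (in particular $\Delta\ge (3c_0+2)/\lambda^2$ and $\Delta\ge (N+1)/\ve^3$) is large enough to absorb the lower-order losses from all the Kempe chain analyses, is the delicate quantitative heart of the argument.
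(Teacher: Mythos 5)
You are attempting to prove a statement that the paper itself does not prove: Lemma~\ref{LEM:largedegree} is imported verbatim as Corollary~10 of the cited reference \cite{avgdegree}, so there is no in-paper argument to compare yours against. Judged on its own terms, your proposal is a plan rather than a proof, and the plan has genuine gaps at exactly the points where the theorem's content lives. The decisive step --- ``a pigeonhole argument then produces either (i) two seeds whose extended fans share enough colored structure that a joint Kempe rotation closes the fan on a missing color of $x$, or (ii) a single extended fan long enough that the terminal Kempe chain necessarily returns to $x$'' --- is asserted, not executed. You never specify what the $N$ pigeonholes are, why two ``bad'' seeds landing in the same hole yields a configuration that can be recolored to extend $\varphi$ to $xy$, or why a long fan forces the terminal $(\gamma,\delta)$-chain to end at $x$. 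Each of these is precisely the kind of claim that, in Vizing-fan arguments, requires a careful case analysis of how the relevant Kempe chains intersect the fan; none of it is routine.

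Second, you explicitly flag the non-interference problem (a Kempe swap at a deep level destroying a missing-color anchor at a shallow level) and then resolve it with ``provided the pruning removes only $O(1)$ colors per level'' --- a proviso you do not verify, and which you yourself call ``the delicate quantitative heart of the argument.'' A proof that defers its own delicate heart is not a proof. Finally, while your numerology is suggestive --- the branching factor $1/\lambda+1$, the depth $3c_0+2$, and the factor $c_0+2$ do reassemble into the stated $N=(c_0+2)(1/\lambda+1)^{3c_0+2}$ --- matching the shape of a bound is not evidence that the construction producing it is correct; in particular you give no argument that a vertex $z$ with $|\pb(z)-\{\varphi(xz)\}|\ge\lambda\Delta$ actually supplies $\lambda\Delta$ \emph{usable} fan extensions after all the chains committed at earlier levels are excluded. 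As it stands the proposal identifies the right toolbox but does not close any of the arguments; the correct course here is to cite \cite{avgdegree} rather than to reprove the result from this sketch.
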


\begin{LEM}[Corollary 15, \cite{avgdegree}]\label{LEM:smalldegree}
	Let $G$ be a $\Delta$-critical graph, $\ve\in (0,1)$  and $q=(1-\ve)\Delta$, and let
$xy\in E(G)$ with $d(x)<\ve\Delta$ and $\varphi\in \CC^\Delta(G-xy)$.
	Then for any $z\in N(x,M_\varphi)$,
	$$
	\pb(z)-\{\varphi(xz)\}\subseteq B_\varphi(q).
	$$
	Moreover, for any distinct $z_1,z_2\in  N(x,M_\varphi)$, 
	$$
	\pb(z_1)\cap \pb(z_2)=\emptyset. 
	$$
\end{LEM}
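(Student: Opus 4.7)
The plan is to prove both conclusions by contradiction: any violation will be converted into a proper edge $\Delta$-coloring of $G$ that extends $\varphi$, contradicting $\chi'(G)=\Delta+1$. The main tools would be Vizing fan constructions rooted at $x$ and iterated $(\alpha,\beta)$-Kempe chain swaps.

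For Part~$(1)$, I would fix $z\in N(x,M_\varphi)$ and set $\alpha=\varphi(xz)$. Since $\alpha\in\varphi(x)$, we have $\alpha\notin\pbar(x)$, so $\alpha\in\pbar(y)\cup A_\varphi(q)$. This yields a short fan at $x$ starting from the uncolored edge $xy$ and reaching $z$: the length-$2$ fan $(y,z)$ when $\alpha\in\pbar(y)$, or the length-$3$ fan $(y,u,z)$ when $\alpha\in A_\varphi(q)$, where $u\in N(y)$ is the unique neighbor with $\varphi(yu)=\alpha$ and $d(u)<q$. Now suppose for contradiction there is $\gamma\in\pb(z)-\{\alpha\}$ with $\gamma\notin B_\varphi(q)$. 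Unpacking the definitions, $\gamma\in\pbar(x)\cup\pbar(y)\cup A_\varphi(q)$, and simultaneously $\gamma$ is either missing at $z$ or used on an edge $zz'$ with $d(z')<q$. The sub-case $\gamma\in\pbar(x)$ is immediate from the classical fan property (no color missing at the terminal vertex of a Vizing fan at $x$ can be missing at $x$). In the remaining sub-cases, I would mirror the initial fan at the $z$-end using $\gamma$ (and possibly $z'$): a single $(\gamma,\beta)$-Kempe swap at $z$ for a suitable $\beta\in\pbar(x)$ simultaneously releases $\alpha$ for use on $xy$ and yields a valid color for $xz$. The hypothesis $d(x)<\ve\Delta$ gives $|\pbar(x)|>(1-\ve)\Delta$, leaving ample choice of $\beta$ avoiding all fan colors; the hypotheses $d(u),d(z')<q$ give each such vertex more than $\ve\Delta$ missing colors, so Kempe swaps along the chain terminate favorably.

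For Part~$(2)$, take distinct $z_1,z_2\in N(x,M_\varphi)$ with $\gamma\in\pb(z_1)\cap\pb(z_2)$, and write $\alpha_i=\varphi(xz_i)$. If $\gamma\in\{\alpha_1,\alpha_2\}$, say $\gamma=\alpha_1$, then $\gamma\neq\alpha_2$ (distinct edges at $x$), so Part~$(1)$ applied to $z_2$ forces $\gamma\in B_\varphi(q)$; but $\gamma=\alpha_1\in M_\varphi$ is disjoint from $B_\varphi(q)$, a contradiction. Otherwise $\gamma\neq\alpha_1,\alpha_2$, and Part~$(1)$ gives $\gamma\in B_\varphi(q)$: so $\gamma$ is used at $x$ on a unique edge $xw$ (with $w\neq z_1,z_2$) and at $y$ on an edge $yu$ with $d(u)\ge q$. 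I would then build a ``double fan'' at $x$ through $z_1$ and $z_2$ merged along $xw$; a single $(\gamma,\beta)$-Kempe swap for $\beta\in\pbar(x)$ frees $\gamma$ at one of $z_1,z_2$, and the rotated fan then admits a valid color for $xy$, again contradicting criticality.

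The main obstacle, I expect, is the case analysis in Part~$(1)$ when $\alpha\in A_\varphi(q)$ \emph{and} $\gamma$ is witnessed at $z$ by a low-degree edge $zz'$ (i.e., $d(z')<q$), so that low-degree vertices appear on \emph{both} ends of the fan. The argument then chains several Kempe swaps, each of which must avoid disturbing edges fixed by earlier steps. The available slack comes exactly from $q=(1-\ve)\Delta$ paired with $d(x)<\ve\Delta$: each low-degree fan vertex contributes more than $\ve\Delta$ missing colors and $x$ contributes more than $(1-\ve)\Delta$ missing colors, together outnumbering the bounded collection of ``forbidden'' fan colors at each step. The bookkeeping is mechanical once the parameters are pinned down.
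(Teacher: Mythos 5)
This lemma is not proved in the paper at all --- it is imported verbatim as Corollary~15 of \cite{avgdegree} --- so there is no in-paper argument to compare against; the question is whether your sketch would stand on its own, and it would not. The central difficulty is waved away at two places. First, when $\alpha=\varphi(xz)\in A_\varphi(q)$, the witness $u$ satisfies $u\in N(y)$, not $u\in N(x)$, so $(y,u,z)$ is \emph{not} a Vizing fan at $x$; it is a Kierstead-path/tree-like configuration, and the ``classical fan property'' you invoke for the sub-case $\gamma\in\pbar(x)$ simply does not apply to it without a separate argument. Handling exactly this configuration (low-degree vertices hanging off $y$ and off $z$ simultaneously) is the substance of the lemma, and your sketch explicitly defers it. Second, the assertion that ``a single $(\gamma,\beta)$-Kempe swap at $z$ simultaneously releases $\alpha$ for use on $xy$ and yields a valid color for $xz$'' is unjustified: the $(\gamma,\beta)$-chain through $z$ may terminate at $x$, at $y$, at $u$, or at another vertex of the structure, in which case the swap destroys the configuration rather than completing the coloring. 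Counting missing colors (``ample choice of $\beta$'') bounds how many colors are forbidden but says nothing about where a given chain ends; every proof of a statement of this type consists precisely of the case analysis you label ``mechanical bookkeeping.''

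The second conclusion is also underestimated. Your reduction of the case $\gamma\in\{\alpha_1,\alpha_2\}$ to Part~(1) via $M_\varphi\cap B_\varphi(q)=\emptyset$ is fine, but in the main case Part~(1) only places $\gamma$ in $B_\varphi(q)$; it does not preclude $\gamma$ from lying in $\pb(z_1)\cap\pb(z_2)$, e.g.\ witnessed by two different low-degree neighbors $z_1'\in N(z_1)$ and $z_2'\in N(z_2)$. Ruling that out requires showing that a recoloring which exploits the occurrence of $\gamma$ at $z_1$ leaves intact (or can be made compatible with) the occurrence at $z_2$, which is a genuinely multi-step exchange argument, not a single swap on a ``double fan.'' As written, the proposal is a plausible plan whose decisive steps are asserted rather than proved, and one of its structural premises (that $z$ and $u$ sit on a common Vizing fan at $x$) is false.
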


\begin{LEM}[Lemma 13, \cite{avgdegree}]\label{LEM:degree3}
				Let $G$ be a $\Delta$-critical graph, $\ve\in (0,1)$  and $q=(1-\ve)\Delta$, and let
			$xy\in E(G)$ with $d(x)<\ve\Delta$ and $\varphi\in \CC^\Delta(G-xy)$. For $z\in N(x, M_\varphi)$ and $\beta \in B_\varphi(x,y,q)$, if $\beta \in \pb(z)$, then for any $\alpha \in M_\varphi(x,y,q)$, there exist $z'\in N(x, B_\varphi)$ and $u\in N(z')$ such that $\varphi(xz')=\beta$, $\varphi(z'u)=\alpha$
	and $d(u)\ge q$. 
\end{LEM}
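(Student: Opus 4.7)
The plan is to argue by contradiction. Assume that for some $\alpha\in M_\varphi(x,y,q)$, no neighbor $u$ of $z'$ (where $z'$ is the unique neighbor of $x$ with $\varphi(xz')=\beta$) satisfies $\varphi(z'u)=\alpha$ and $d(u)\ge q$; equivalently, $\alpha\in\pb(z')$. Note that $z'\in N(x,B_\varphi)$ and $z\ne z'$, since $\varphi(xz)\in M_\varphi$, $\varphi(xz')=\beta\in B_\varphi$, and $M_\varphi\cap B_\varphi=\emptyset$. The goal is to use the two assumptions $\beta\in\pb(z)$ and $\alpha\in\pb(z')$ to modify $\varphi$ into a $\Delta$-edge-coloring of all of $G$, contradicting $\chi'(G)=\Delta+1$.

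The main idea is a short Vizing fan at $x$ built from the edges $xy$, $xz$, $xz'$, preceded by Kempe-chain swaps that enforce the missing-color conditions needed for the fan. First, because $\varphi(xz)\in M_\varphi=A_\varphi\cup\pbar(x)\cup\pbar(y)$ and $\varphi(xz)\in\varphi(x)$, we have $\varphi(xz)\in A_\varphi\cup\pbar(y)$; in the $A_\varphi$ subcase a preliminary swap at the low-degree $y$-neighbor witnessing membership in $A_\varphi$ shifts $\varphi(xz)$ into $\pbar(y)$. Second, because $\beta\in\pb(z)$, either $\beta\in\pbar(z)$ outright, or the $\beta$-neighbor $v$ of $z$ satisfies $d(v)<q=(1-\ve)\Delta$; in the latter case, because $|\pbar(v)|>\ve\Delta$ while only a constant-sized list of colors must be avoided, one can choose $\mu\in\pbar(v)$ so that a Kempe-chain swap in colors $(\beta,\mu)$ produces a $\Delta$-coloring of $G-xy$ in which $\beta\in\pbar(z)$ and the edges at $x$, $y$, $z$, $z'$ are undisturbed. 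An analogous reduction for $\alpha\in\pb(z')$ yields a coloring $\varphi'\in\CC^\Delta(G-xy)$ with $\beta\in\pbar(z)$ and $\alpha\in\pbar(z')$ simultaneously.

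Given this $\varphi'$, the fan $y_0=y,\ y_1=z,\ y_2=z'$ satisfies $\varphi'(xy_i)\in\pbar(y_{i-1})$ for $i=1,2$. The standard shift-extension closes the fan and yields a proper $\Delta$-coloring of $G$ (extending $\varphi'$ to $xy$) whenever $\pbar(y_k)\cap\pbar(x)\ne\emptyset$ for some $k\in\{0,1,2\}$. The case $k=2$ works directly when $\alpha\in\pbar(x)$; when $\alpha\in\pbar(y)$ or $\alpha\in A_\varphi$, one additional Kempe-chain swap using $\alpha$ and a color in $\pbar(x)$ suffices to move $\alpha$ into $\pbar(x)$, after which the fan closes. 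In every case one obtains a $\Delta$-edge-coloring of $G$, contradicting the $\Delta$-criticality.

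The main obstacle, I expect, is the compatibility of the several Kempe-chain swaps used in the reductions: at each swap the chosen color must avoid a finite list of ``dangerous'' colors so that the earlier reductions and the edges on the final fan are preserved, and the order of swaps must be arranged so that later ones do not unwind earlier ones. The decisive quantitative input is the abundance of missing colors at every small-degree pivot vertex (each more than $\ve\Delta$) set against only a constant-sized exclusion list; this disparity provides the slack needed to make the swap choices compatible. Once the bookkeeping is carried out, the final Vizing-fan shift is routine.
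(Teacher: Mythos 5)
The paper does not actually prove this statement; it is imported wholesale as Lemma~13 of \cite{avgdegree}, so there is no in-paper argument to measure yours against. Your overall framing is the right genre: the negation of the conclusion is precisely $\alpha\in\pb(z')$ for the unique $z'\in N(x,B_\varphi)$ with $\varphi(xz')=\beta$, and combining this with $\beta\in\pb(z)$ to force an extension of $\varphi$ to all of $G$ is how adjacency lemmas of this kind are proved. Your analysis of the final fan shift through $y,z,z'$ (in the subcase where everything is genuinely missing and $\alpha\in\pbar(x)$) is also correct.

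The proof breaks, however, at the reduction step on which everything else depends. You want to upgrade ``$\beta\in\pb(z)$ witnessed by an edge $zv$ with $d(v)<q$'' to ``$\beta\in\pbar(z)$'' by picking $\mu\in\pbar(v)$ and swapping the $(\beta,\mu)$-Kempe chain at $v$. That chain begins with the edge $vz$ itself; if $\mu\in\varphi(z)$, the chain passes through $z$ as an interior vertex, and after the swap $z$ still sees both $\beta$ and $\mu$ on its two chain edges (with colors exchanged), so $\beta$ is \emph{not} freed at $z$. The swap liberates $\beta$ at $z$ only when the chain terminates there, i.e., only when $\mu\in\pbar(v)\cap\pbar(z)$ --- but $\pbar(z)$ may be empty, since $z\in N(x)$ and by Vizing's Adjacency Lemma almost all neighbors of $x$ have degree $\Delta$. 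The abundance of missing colors at $v$ is therefore of no help for this particular move: no choice of $\mu\in\pbar(v)$ circumvents the obstruction, and the same defect afflicts your reduction for $\alpha\in\pb(z')$. In arguments of this type the hypothesis $d(v)<q$ is exploited differently: together with $d(x)<\ve\Delta$ it gives $|\pbar(x)|+|\pbar(v)|>\Delta$, hence $\pbar(x)\cap\pbar(v)\ne\emptyset$, which is played off against the elementarity of a fan- or Kierstead-path-type structure built \emph{through} the low-degree vertex $v$ (or $u$), rather than recoloring $v$ out of the way first. (The paper's own proof of Lemma~\ref{bbcolor} illustrates how delicate the bookkeeping of ``valid'' recolorings must be even for a reduction much milder than the one you propose.) As written, then, the argument has a genuine gap and does not close.
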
	

As a special case of Lemma~\ref{LEM:degree3}, we get the following result. 
\begin{COR}\label{p=1}
		Let $G$ be a $\Delta$-critical graph, $\ve\in (0,1)$  and $q=(1-\ve)\Delta$, and let
	$xy\in E(G)$ with $d(x)<\ve\Delta$ and $\varphi\in \CC^\Delta(G-xy)$.  If $|B_\varphi(q)|=1$ and there exists 
	$z\in N(x, M_\varphi)$ such that $\pb(z)\cap B_\varphi(q) \ne \emptyset$, then for any 
	$z'\in N(x, B_\varphi)$  and any $u\in N(z')-\{x\}$, it holds that $d(u)\ge q$.  
\end{COR}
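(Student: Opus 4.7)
The plan is to derive the corollary as a direct specialization of Lemma \ref{LEM:degree3}, exploiting the fact that $|B_\varphi(q)|=1$ rigidly determines the unique neighbor $z'$ of $x$ carrying the $B_\varphi$-color and forces every other color at $z'$ to lie in $M_\varphi$.

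First, I would record the structural identity $M_\varphi(x,y,q) \cup B_\varphi(x,y,q) = [1,\Delta]$. The $\Delta$-criticality of $G$ gives $\pbar(x) \cap \pbar(y) = \emptyset$, so $\pbar(x) \subseteq \varphi(y)$ and $\pbar(y) \subseteq \varphi(x)$; combining these, $[1,\Delta] = (\varphi(x) \cap \varphi(y)) \cup \pbar(x) \cup \pbar(y)$, which (using $A_\varphi(q) \cup B_\varphi(q) = \varphi(x) \cap \varphi(y)$) equals $A_\varphi(q) \cup B_\varphi(q) \cup \pbar(x) \cup \pbar(y) = M_\varphi(q) \cup B_\varphi(q)$.

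Next, write $B_\varphi(q) = \{\beta\}$. Since $\beta \in \varphi(x)$ and $\varphi$ is proper at $x$, there is exactly one $z' \in N(x)$ with $\varphi(xz') = \beta$, so $N(x, B_\varphi) = \{z'\}$. The hypothesis $\pb(z) \cap B_\varphi(q) \ne \emptyset$ simply reads $\beta \in \pb(z)$, which is exactly the input needed to invoke Lemma \ref{LEM:degree3} at this $z$ with this $\beta$.

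Finally, I would pick an arbitrary $u \in N(z') - \{x\}$ and set $\alpha = \varphi(z'u)$. Properness of $\varphi$ at $z'$ gives $\alpha \ne \beta$, and the identity above together with $B_\varphi(q) = \{\beta\}$ then forces $\alpha \in M_\varphi(x,y,q)$. Lemma \ref{LEM:degree3} applied to this $\alpha$ produces some $z'' \in N(x, B_\varphi)$ and $u'' \in N(z'')$ with $\varphi(xz'')=\beta$, $\varphi(z''u'')=\alpha$, and $d(u'')\ge q$; since $N(x,B_\varphi)=\{z'\}$ and the neighbor of $z'$ carrying color $\alpha$ is unique, $z''=z'$ and $u''=u$, so $d(u)\ge q$ as required. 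There is no real obstacle here: the only point requiring care is the set identity $M_\varphi \cup B_\varphi = [1,\Delta]$, after which the uniqueness forced by $|B_\varphi(q)|=1$ combined with properness of $\varphi$ does all the work.
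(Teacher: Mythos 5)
Your proposal is correct and is exactly the route the paper intends: the paper gives no written proof, merely declaring the corollary ``a special case of Lemma~\ref{LEM:degree3}'', and your argument supplies precisely that specialization (the identity $M_\varphi(q)\cup B_\varphi(q)=[1,\Delta]$, the uniqueness of $z'$ from $|B_\varphi(q)|=1$ and properness, and the uniqueness of the $\alpha$-edge at $z'$ identifying the lemma's output with the given $u$). The only cosmetic remark is that the set identity $[1,\Delta]=(\varphi(x)\cap\varphi(y))\cup\pbar(x)\cup\pbar(y)$ holds for any proper coloring and does not actually need $\pbar(x)\cap\pbar(y)=\emptyset$.
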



\begin{LEM}\label{bbcolor}
	Let $G$ be a $\Delta$-critical graph, $\ve\in (0,1)$  and $q=(1-\ve)\Delta$, and let
	$xy\in E(G)$ with $d(x)<\ve\Delta$ and $\varphi\in \CC^\Delta(G-xy)$.
	 Suppose $z\in N(x,M_\varphi)$, $\beta \in \pb(z)\cap B_\varphi(x,y,q)$,
	 and  $w\in N(x,B_\varphi)$ such that $\varphi(xw)=\beta$. Define $B_w(\varphi)=\{\varphi(ww')\,:\, w'\in N(w), \varphi(ww')\in B_\varphi(x,y,q)-\{\beta\} \,\, \text{and $d(w')<q$}\}$.
	   Then for any $\beta'\in B_w(\varphi)$, there exists 
	   $z'\in N(x,B_\varphi)$ 
	   and $u\in N(z')$ 
	   such that $\varphi(xz')=\beta'$,  $\varphi(z'u)=\beta$, and  $d(u)\ge q$. 
\end{LEM}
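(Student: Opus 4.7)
We argue by contradiction. Let $z' \in N(x)$ be the unique vertex with $\varphi(xz') = \beta'$; since $\beta' \in B_\varphi$, we have $z' \in N(x, B_\varphi)$. Fix $w' \in N(w)$ witnessing $\beta' \in B_w(\varphi)$, so $\varphi(ww') = \beta'$, $d(w') < q$, and $\beta' \ne \beta$. Suppose for contradiction that no $u \in N(z')$ with $\varphi(z'u) = \beta$ satisfies $d(u) \ge q$ --- equivalently, either $\beta \in \pbar(z')$, or every $\beta$-neighbor of $z'$ has degree strictly less than $q$.

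The plan is to perform a Kempe chain exchange of colors $\beta$ and $\beta'$ in $G - xy$ and then apply Lemma~\ref{LEM:degree3} to the resulting coloring. Let $P$ be the $(\beta, \beta')$-Kempe chain in $G - xy$ containing $x$; since $\varphi(xw) = \beta$, $\varphi(ww') = \beta'$, and $\varphi(xz') = \beta'$, the chain $P$ contains the path $w' - w - x - z'$ and may extend further at each end. Let $\varphi_1 \in \CC^\Delta(G - xy)$ denote the coloring obtained from $\varphi$ by interchanging $\beta$ and $\beta'$ along $P$. Since $x$ --- and $y$ as well, in the case $y \in P$ --- is an interior vertex of its $(\beta,\beta')$-chain (both have a $\beta$-edge and a $\beta'$-edge in $\varphi$), the swap preserves the set of colors present at $x$ and at $y$. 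Thus $\overline{\varphi_1}(x) = \pbar(x)$ and $\overline{\varphi_1}(y) = \pbar(y)$, and the auxiliary parameters transfer intact: $A_{\varphi_1} = A_\varphi$, $B_{\varphi_1} = B_\varphi$, and $M_{\varphi_1} = M_\varphi$. After the swap, $\varphi_1(xw) = \beta'$ and $\varphi_1(xz') = \beta$, so $z'$ becomes the unique $\beta$-neighbor of $x$ under $\varphi_1$.

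We next transfer the hypothesis of Lemma~\ref{LEM:degree3} to $\varphi_1$. Because $\varphi(xz) \in M_\varphi$ and $\{\beta, \beta'\} \cap M_\varphi = \emptyset$, the edge $xz$ lies outside $P$, so $z \in N(x, M_{\varphi_1})$; a short case analysis on whether $z$ itself lies on $P$, and on how $\beta \in \pb(z)$ is witnessed (via $\beta \in \pbar(z)$ or via a low-degree $\beta$-neighbor of $z$), shows $\beta \in \pbo(z)$. Applying Lemma~\ref{LEM:degree3} to $\varphi_1$ with the pair $(z, \beta)$ and an arbitrary $\alpha \in M_{\varphi_1}$ then produces $u_\alpha \in N(z')$ with $\varphi_1(z' u_\alpha) = \alpha$ and $d(u_\alpha) \ge q$. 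Since $\alpha \notin \{\beta, \beta'\}$, the edge $z' u_\alpha$ is not in $P$, so $\varphi(z' u_\alpha) = \alpha$ as well. In this way we obtain, for each color $\alpha \in M_\varphi$, a distinct high-degree $\alpha$-neighbor of $z'$ in the original coloring $\varphi$.

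The remaining step --- and the main obstacle --- is to exploit this dense neighborhood structure at $z'$ to reach a contradiction. Combining the $|M_\varphi|$ distinct $\alpha$-neighbors of $z'$ with the assumed absence of any high-degree $\beta$-neighbor of $z'$, we select a color $\alpha_0 \in \pbar(y) \subseteq M_\varphi$ and perform a further $(\beta, \alpha_0)$-Kempe chain exchange, chosen so as to free a common missing color at $x$ and $y$. Combined with $\varphi_1$, this chain of recolorings yields a proper $\Delta$-edge coloring of $G$ in which the edge $xy$ also receives a valid color, contradicting the $\Delta$-criticality of $G$. Making this final extension precise, together with the sub-case bookkeeping that establishes $\beta \in \pbo(z)$, is the technical heart of the proof, as both require careful tracking of how the Kempe chain exchange alters the local structure around $z$ and $z'$.
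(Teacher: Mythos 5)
Your proposal has two genuine gaps, and the second one is fatal to the argument as written. First, the step you defer as ``the technical heart'' is in fact the entire content of the lemma. After the $(\beta,\beta')$-swap you apply Lemma~\ref{LEM:degree3} to $(\varphi_1,z,\beta)$, but its conclusion only produces high-degree neighbors of $z'$ in colors $\alpha\in M_{\varphi_1}=M_\varphi$, and $\beta,\beta'\notin M_\varphi$ (they lie in $B_\varphi\subseteq\varphi(x)\cap\varphi(y)$, which is disjoint from $A_\varphi\cup\pbar(x)\cup\pbar(y)$). So this tells you nothing about the $\beta$-neighbor of $z'$, which is the vertex the lemma is about; the promised further $(\beta,\alpha_0)$-exchange ``chosen so as to free a common missing color at $x$ and $y$'' is exactly the kind of delicate Kempe-chain argument that cannot be waved at, and you give no construction of it. Second, the claim that $\beta\in\pbo(z)$ after the swap is not justified and can fail: only the edge $xz$ is guaranteed to avoid the chain $P$, not the vertex $z$ itself. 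If, say, $\beta\in\pbar(z)$ and $z$ is an endpoint of $P$ reached by a $\beta'$-edge to a high-degree vertex, then under $\varphi_1$ the color $\beta$ is present at $z$ on an edge to that high-degree vertex, so $\beta\notin\pbo(z)$ and Lemma~\ref{LEM:degree3} no longer applies to $(z,\beta)$. (Your preservation claims for $A,B,M$ are fine, since $x$, and $y$ if it lies on $P$, are interior to their chains and the swapped edges at $y$ both go to vertices of degree at least $q$.)

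For comparison, the paper avoids both problems by changing which edge is uncolored rather than swapping $\beta$ and $\beta'$: after normalizing so that $\alpha=\varphi(xz)\in\pbar(y)$ (via a $(\gamma,\delta)$-chain argument), one uncolors $xz$ and colors $xy$ with $\alpha$, obtaining $\varphi_1\in\CC^\Delta(G-xz)$. In this frame $\beta\in M_{\varphi_1}(x,z,q)$ and $\beta'\in B_{\varphi_1}(x,z,q)\cap\pbo(w)$ with $w\in N(x,M_{\varphi_1}(x,z,q))$, so Lemma~\ref{LEM:degree3} applied to the pair $(x,z)$ with $\beta'$ in the role of ``$\beta$'' and $\beta$ in the role of ``$\alpha$'' directly yields $z'$ with $\varphi_1(xz')=\beta'$ and a neighbor $u$ with $\varphi_1(z'u)=\beta$ and $d(u)\ge q$; since $\varphi_1$ agrees with $\varphi$ off $\{xy,xz\}$, this is the desired conclusion with no contradiction argument needed. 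If you want to salvage your route, you would need to supply the missing extension-to-$G$ argument in full; as it stands, the proof is incomplete.
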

\begin{proof}
	A coloring $\varphi'\in \CC^\Delta(G-xy)$ is called {\bf valid} if 
	$M_{\varphi'}(x,y,q)=M_{\varphi}(x,y,q), B_{\varphi'}(x,y,q)=B_{\varphi}(x,y,q)$,
	and $B_w(\varphi')=B_w(\varphi)$. 
	Let $z \in N(x,M_\varphi)$ and let $\beta \in \pb(z)\cap B_\varphi(x,y,q)$. 
	Assume that $\varphi(xz)=\alpha$. We may {\bf assume that ${\mathbf \alpha \in \pbar(y)}$}. Otherwise,  $\alpha \in A_\varphi(x,y,q)$.
	Let $v\in N(y)$ such that $\varphi(yv)=\alpha$ and $\gamma \in \pbar(y)$.
	If $\gamma\in \pbar(v)$, we recolor the edge $yv$ using  the color $\gamma$ and let $\varphi_1$ be the new coloring of $G-xy$. It is clear that 
	for any edge $e\in E(G-xy)$ with $e\ne yv$, $\varphi(e)=\varphi_1(e)$. 
	Furthermore,  $\varphi_1$ is a valid coloring. However, under $\varphi_1$, $\alpha \in \pbar(y)$. 	So we assume that $\gamma\in \varphi(v)$. 
	Since $d(x)<\ve \Delta$ and $d(v)<q=(1-\ve)\Delta$, there exits a color 
	$\delta \in \pbar(x)\cap \pbar(v)$.  Note that $\gamma \in \varphi(x)$ 
	and $\delta \in \varphi(y)$ by the $\Delta$-criticality of $G$. 
	Let $P_v(\gamma, \delta), P_x(\gamma ,\delta)$, and $ P_y(\gamma ,\delta)$ be the paths 
    	induced by the two colors $\gamma $ and $\delta$ which starts at $v,x$ and $y$, respectively. 
        We claim that $P_x(\gamma ,\delta)= P_y(\gamma, \delta)$. 
       For otherwise, let $\varphi_1$ be the new coloring of $G-xy$ obtained by switching the colors  $\gamma $ and $\delta$ on the path $P_x(\gamma, \delta)$. Then $\varphi_1$ is a $\Delta$-coloring of $G-xy$ such that 
       $\gamma \in \pbar_1(x)\cap \pbar_1(y)$. Now coloring  the edge $xy$ using the color $\gamma$ gives a $\Delta$-coloring of $G$, showing a contradiction to the assumption that $\chi'(G)=\Delta +1$. 
       Thus, $P_x(\gamma ,\delta)= P_y(\gamma, \delta)$. This implies that 
       $P_v(\gamma, \delta)$ is vertex-disjoint from $P_x(\gamma ,\delta)$. 
       We let $\varphi_1$ be the new coloring of $G-xy$ obtained by switching the colors  $\gamma $ and $\delta$ on the path $P_v(\gamma, \delta)$.
       We now have that $\gamma \in \pbar_1(v)$. Since the switching of colors on 
       $P_v(\gamma, \delta)$ does not affect the colors on the edges incident to $y$, we still have that $\gamma \in \pbar_1(y)$. 
       Let $\varphi_2$ be the new coloring of $G-xy$ obtained from $\varphi_1$ by recoloring the edge $yv$ using the color $\gamma$, we see that $\alpha \in \varphi_2(y)$. Because $\delta, \gamma, \alpha \in M_\varphi(x,y,q)$ and to get $\varphi_2$,  we only switched the two colors $\gamma $ and $\delta$ on the path 
       $P_v(\gamma, \delta)$, and then changed the color on the edge $yv$ from $\alpha$ to   $\gamma$, 
       $\varphi_2$ is a valid coloring. Furthermore, $x\not\in V(P_v(\gamma, \delta))$, for any edge $e$ which is incident to 
       $x$ or the vertex $u\in N(x)$ such that $\varphi(xu)=\gamma$, $\varphi_2(e)=\varphi(e)$. Thus, we can use $\varphi_2$ as a coloring 
       for $G-xy$ which satisfies that ${\mathbf \alpha \in \pbar_2(y)}$.
       
       We now take $z \in N(x,M_\varphi)$,    $\beta \in \pb(z)\cap B_\varphi(x,y,q)$ such that $\varphi(xz)=\alpha$ and   $\alpha \in \pbar(y)$.  We take the color $\alpha$ on the edge $xz$ 
       out and color the edge $xy$ using the color $\alpha$, and we get a coloring $\varphi_1$ of $G-xz$.  We see that $\beta \in A_{\varphi_1}(x,z,q)$ and $\alpha \in \pbar_1(z)$. 
       Since $\varphi_1 (e)=\varphi(e)$ for any $e\not\in \{xy, xz\}$, for the specified vertex  $w\in N(x,B_\varphi(x,y,q))$ such that $\varphi(xw)=\beta$, we still have that $\varphi_1(xw)=\beta$, and $B_w(\varphi)=B_w(\varphi_1)$. 
       Since $\beta \in A_{\varphi_1}(x,z,q)$, by Lemma~\ref{LEM:smalldegree}, $B_w(\varphi)\subseteq \pbo(w)$ and $B_w(\varphi)\subseteq B_{\varphi_1}(x,z,q)$. By Lemma~\ref{LEM:degree3}, we know that 
     for any $\beta'\in B_w(\varphi_1)$, there exists 
     $z'\in N(x,B_{\varphi_1}(x,z,q))$ 
     and $u\in N(z')$ 
     such that $\varphi_1(xz')=\beta'$,  $\varphi_1(z'u)=\beta$, and  $d(u)\ge q$.  Since
      $B_w(\varphi)=B_w(\varphi_1)$,  $B_w(\varphi)\subseteq B_{\varphi}(x,z,q)$, 
     $B_w(\varphi)\subseteq B_{\varphi_1}(x,z,q)$, and  $\varphi_1(xz')=\varphi(xz')$,  we see that $z'\in N(x,B_{\varphi_1}(x,z,q))$ with $\varphi_1(xz')=\beta'$ implies that 
     $z'\in  N(x,B_{\varphi}(x,z,q))$. Furthermore, since 
      $\varphi_1(xz')=\varphi(xz')$ and $\varphi_1(z'u)=\varphi(z'u)$, we see that the assertion in Lemma~\ref{bbcolor} holds. 
\end{proof}	
	
	Combining Lemma~\ref{LEM:degree3} and Lemma~\ref{bbcolor}, we obtain the 
	following result. 
\begin{COR}\label{bbcolor2}
		Let $G$ be a $\Delta$-critical graph, $\ve\in (0,1)$  and $q=(1-\ve)\Delta$, and let
	$xy\in E(G)$ with $d(x)<\ve\Delta$ and $\varphi\in \CC^\Delta(G-xy)$.
	If $(\bigcup_{z\in N(x, M_\varphi)}\pb(z))\cap B_\varphi(x,y,q)=B_\varphi(x,y,q)$ and $|B_\varphi(x,y,q)|=2$, then 
	 for at least one $z'\in N(x,B_\varphi)$ and for any $u\in N(z')-\{x\}$,  $d(u)\ge q$.
\end{COR}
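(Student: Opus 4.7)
My plan is to reduce the statement to a short dichotomy on the colors appearing at the two vertices $w_1, w_2 \in N(x, B_\varphi)$, using Lemma~\ref{LEM:degree3} to dispose of edges colored from $M_\varphi$ and then invoking Lemma~\ref{bbcolor} to transfer information between $w_1$ and $w_2$ when necessary.

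Write $B_\varphi(x,y,q) = \{\beta_1, \beta_2\}$, and for $i=1,2$ let $w_i$ be the unique vertex with $\varphi(xw_i) = \beta_i$. The hypothesis $(\bigcup_{z \in N(x, M_\varphi)} \pb(z)) \cap B_\varphi = B_\varphi$ supplies vertices $z_1, z_2 \in N(x, M_\varphi)$ with $\beta_i \in \pb(z_i)$. A direct check of the definitions shows that $M_\varphi(x,y,q)$ and $B_\varphi(x,y,q)$ partition $[1,\Delta]$, so every edge incident to $w_i$ carries a color either in $M_\varphi$ or in $\{\beta_1, \beta_2\}$.

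For the $M_\varphi$-colored edges at $w_i$ I would apply Lemma~\ref{LEM:degree3} with $z = z_i$ and $\beta = \beta_i$: for each $\alpha \in M_\varphi$ the lemma produces some $z' \in N(x, B_\varphi)$ with $\varphi(xz') = \beta_i$, and this forces $z' = w_i$ by uniqueness of the $\beta_i$-neighbor of $x$, together with a $u \in N(w_i)$ satisfying $\varphi(w_i u) = \alpha$ and $d(u) \ge q$. Hence every neighbor of $w_i$ reached by an $M_\varphi$-colored edge has degree at least $q$. The only other edges at $w_i$ are $B_\varphi$-colored; $xw_i$ is the edge of color $\beta_i$, so the only remaining possibility at $w_i$ is a single edge $w_i w_i'$ with $\varphi(w_i w_i') = \beta_{3-i}$.

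The final step is to split on the degree of $w_1'$. If $w_1$ has no $\beta_2$-edge at all, or if $d(w_1') \ge q$, then we may take $z' = w_1$. Otherwise $d(w_1') < q$, which places $\beta_2 \in B_{w_1}(\varphi)$; applying Lemma~\ref{bbcolor} with $z = z_1$, $\beta = \beta_1$, $w = w_1$, and $\beta' = \beta_2$ returns some $z' \in N(x, B_\varphi)$ with $\varphi(xz') = \beta_2$, forced to be $w_2$, together with a neighbor $u \in N(w_2)$ satisfying $\varphi(w_2 u) = \beta_1$ and $d(u) \ge q$. Combined with the $M_\varphi$-analysis applied to $w_2$, every neighbor of $w_2$ other than $x$ now has degree at least $q$, and we may take $z' = w_2$. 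The one delicate point is this last transfer: because $|B_\varphi| = 2$, the set $B_\varphi - \{\beta_1\}$ is exactly $\{\beta_2\}$, which is what makes the $z'$ produced by Lemma~\ref{bbcolor} land on precisely the other candidate $w_2$ and close the dichotomy.
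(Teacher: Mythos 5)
Your argument is correct and is exactly the combination of Lemma~\ref{LEM:degree3} and Lemma~\ref{bbcolor} that the paper invokes (the paper states the corollary without writing out the details). The two key observations you make explicit --- that $M_\varphi(x,y,q)$ and $B_\varphi(x,y,q)$ partition $[1,\Delta]$ so the only edge at $w_i$ not handled by Lemma~\ref{LEM:degree3} is the possible $\beta_{3-i}$-edge, and that $|B_\varphi|=2$ forces the $z'$ produced by Lemma~\ref{bbcolor} to be $w_2$ --- are precisely what makes the intended one-line combination go through.
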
	

\section{Proof of Theorem~\ref{THM-main}}


\begin{proof}[Proof of Theorem~\ref{THM-main}]
	Let $G$ be a $\Delta$-critical graph of order $n$ with minimum degree at least $d$ and maximum degree $\Delta\ge (d+2)^{5d+10}$.  In fact, the proof only needs $\Delta \ge D_0$
	with $D_0$ defined in Equation~\eqref{D0}, where  $D_0\le (d+2)^{5d+10}$ by calculations with $\ve$ defined in~\eqref{def-of-q}, 
	since by Equation \eqref{c0} and Equation \eqref{def-of-q} , $c_0\le d/2$.
	By the definition of $D_0$ in \eqref{D0}, 
	\begin{eqnarray*}
	D_0&=&	\max\{f(\ve), \frac{3c_0+2}{\lambda^2}, \frac{N+1}{\ve^3}\}\\
	&= & \frac{N+1}{\ve^3} \le (\frac{d}{2}+2)(\frac{(d+2)^3}{4}+1)^{1.5d+2} (\frac{d+2}{2})^3\\
	&<& (d+2)^4(d+2)^{4.5d+6}<(d+2)^{5d+10}.
	\end{eqnarray*}

	Let $X$ be a largest independent set and let $Y=V(G)-X$. Note that $Y$ is not an independent set in $G$. For otherwise $G$ is a bipartite graph which is not $\Delta$-critical. Define 
	\begin{equation}
	\omega= \left.
	\begin{cases} \label{omega}
	2, & \text{if $d=3,4$; }  \\
	\\ 
	\sqrt[3]{(d-1)d}, & \text{if $d\ge 19$,}
	\end{cases}
	\right.
	\end{equation}
and define 	
	\begin{eqnarray}\label{def-of-q}
q=\frac{d+2-\omega}{d+2}\Delta, & \lambda =\frac{\omega ^3}{2(d+2)^3}, &\quad\text{and}\quad \ve=\frac{\omega}{d+2}.
	\end{eqnarray}
Denote by   
$$
\begin{array}{ll}
X^{++}=\{x\in X: d(x)=\Delta\}, &  \\
X^+\,\,\,=\{x\in X: q\le d(x)<\Delta\},\\
X^{-}_1\,\,\,=\{x\in X: \ve \Delta \le d(x)<q\}, & \\
X^-_2\,\,\,=\{x\in X: d\le d(x)<\ve\Delta\},& \quad  \,\text{if $d\ge 19$,} \\
X^-_2\,\,\,=\{x\in X: 3d-3\le d(x)<\ve\Delta\}, & \quad  \, \text{if $d=3, 4$,}\\
X_3^-\,\,\,=\{x\in X: d\le d(x)<3(d-1)\},& \quad  \, \text{if $d=3, 4$,} \\
X^-\,\,\,=\{x\in X: d\le d(x)<q\}. &
 \end{array}
$$
Since $G$ has minimum degree at least $d$, by the definitions above, we see that $X=X^{++}\cup X^+\cup X^-$, 
 for $d\ge 19$, $X^-=X_1^-\cup X_2^-$,  and for $d=3,4$, $X^-=X_1^-\cup X_2^-\cup X_3^-$.

For each positive integer $k$,  define 
\begin{eqnarray}\label{def-of-g}
g_1(k)=\frac{(d+2)(\Delta -k)}{k}, & \quad \text{and}\quad g_2(k)=\frac{\omega \Delta}{k-1}.	
\end{eqnarray}
Clearly, $g_1(k)$ and $g_2(k)$ are both decreasing functions of $k$, and  
\begin{eqnarray}\label{g1q}
g_1(q)=\frac{(d+2)(\Delta -q)}{q}=\frac{\omega (d+2)}{ d+2-\omega}.	
\end{eqnarray}

\begin{CLA}\label{Claim1}
Let  $x\in X^+$ and $k=d(x)$.  Then $g_1(k)\le g_2(k)$. 
\end{CLA}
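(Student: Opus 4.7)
The plan is to unpack the inequality $g_1(k)\le g_2(k)$, clear denominators, and reduce it to a single one-variable polynomial inequality in $k$. First I would substitute the definitions and note that since $x\in X^+$ we have $k=d(x)\ge q>1$, so both denominators in
$$\frac{(d+2)(\Delta-k)}{k}\ \le\ \frac{\omega\,\Delta}{k-1}$$
are positive and cross-multiplication is legitimate. Using the identity $\omega\Delta=(d+2)(\Delta-q)$, which is immediate from $q=\frac{d+2-\omega}{d+2}\Delta$, and then dividing through by $d+2$, the inequality becomes equivalent to
$$k\,(q+1-k)\ \le\ \Delta.$$

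Next I would analyze $f(k):=k(q+1-k)$ as a downward-opening parabola in $k$ with vertex at $k=(q+1)/2$. Because $q>1$ (which is clear from $q=\frac{d+2-\omega}{d+2}\Delta$ together with $\Delta\ge D_0$), the vertex lies strictly to the left of $q$, so $f$ is decreasing on $[q,\Delta)$ and attains its maximum over the admissible range $k\in[q,\Delta-1]$ at the left endpoint $k=q$. This gives $f(k)\le f(q)=q\cdot 1=q<\Delta$, which holds on exactly the set $\{k:q\le k<\Delta\}$ describing $X^+$, completing the verification.

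The main (and essentially only) obstacle here is the algebraic bookkeeping: spotting that substituting $\omega\Delta=(d+2)(\Delta-q)$ after cross-multiplication collapses the inequality into the clean form $k(q+1-k)\le\Delta$, after which a single monotonicity observation finishes the job. No coloring arguments or adjacency lemmas are required for this claim; it is a purely arithmetic consequence of the definitions of $q$, $\omega$, $g_1$, and $g_2$, and it is precisely the piece that will later let us replace the pointwise bound $g_1(d(x))$ by the uniform bound $g_2(d(x))$ when $x\in X^+$ in the counting argument for $\alpha(G)$.
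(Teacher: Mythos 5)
Your proof is correct. The reduction is sound: for $x\in X^+$ we have $k\ge q>1$ and $k<\Delta$, so both denominators are positive, and cross-multiplying $\frac{(d+2)(\Delta-k)}{k}\le\frac{\omega\Delta}{k-1}$ while substituting $\omega\Delta=(d+2)(\Delta-q)$ and cancelling $d+2$ does collapse the inequality to $(\Delta-k)(k-1)\le(\Delta-q)k$, i.e.\ $k(q+1-k)\le\Delta$; since the parabola $k\mapsto k(q+1-k)$ has its vertex at $(q+1)/2<q$, it is decreasing on $[q,\Delta)$ and its value at $k=q$ is $q<\Delta$. The paper reaches the same conclusion by the same overall strategy (monotonicity in $k$ plus evaluation at the left endpoint $k=q$, using the identity $g_1(q)=\omega\Delta/q$), but executes it differently: it differentiates $g(k)=g_2(k)-g_1(k)$, shows $g'(k)\ge 0$ via the auxiliary bound $\omega\le\frac{(k-1)^2(d+2)}{k^2}$, and then checks $g(q)=\frac{\omega\Delta}{q-1}-\frac{\omega\Delta}{q}>0$. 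Your version is more elementary — it avoids calculus and the auxiliary bound on $\omega$ entirely, trading them for one algebraic identity and the vertex of a quadratic — while the paper's derivative computation is reused in spirit elsewhere (e.g.\ in Claim 3.5's endpoint comparisons). Either argument is a complete proof of the claim.
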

\pf Let $g(k)=g_2(k)-g_1(k)=\frac{\omega \Delta}{k-1}-\frac{(d+2)(\Delta -k)}{k}$. 
By calculation, the first order derivative of $g(k)$ is $g'(k)=\frac{(k-1)^2(d+2)\Delta-k^2\omega \Delta}{(k-1)^2k^2}$. Since $\omega\le \frac{(k-1)^2(d+2)}{k^2}$, $g'(k)\ge 0$. 
Thus, since $k\ge q$ if $x\in X^+$, 
\begin{eqnarray*}
	g(k)\ge g(q) & =& g_2(q)-g_1(q),\\
	               &=& \frac{\omega \Delta}{q-1}-\frac{\omega \Delta}{q}>0.
\end{eqnarray*}
Hence, $g_1(k)\le g_2(k)$.
\qqed 

Define three charge functions on $V(G)$ as follows. 
$$
\begin{array}{llll}
	M_0(x)=0, & M_1(x)=(d+2)d(x), & M_2(x)=(d+2)\Delta, &\text{if $x\in X$}\\
M_0(y)=(d+2+\omega)\Delta, & M_1(y)=\omega \Delta, & M_2(y)=0, &\text{if $y\in Y$}.
\end{array}
$$

Starting with the distribution $M_0$, we redistribute charge according to the following {\bf Discharging Rule}:
\begin{flushright}     
\begin{enumerate}
	\item []{\bf Step 0}: Each vertex $y\in Y$ gives charge $d+2$ to each vertex $x\in N(y)\cap X$. 
	Denote the resulting charge by $M_0^*$. 
	\item []{\bf Step 1}: Stating with $M_1$, each vertex $y\in Y$ gives charge $g_1(d(x))$ to each 
	$x\in N(y)\cap X^+$.  Denote the resulting charge by $M_1^*$.  
	
	\item []{\bf Step 2}: Stating with $M_1^*$, for each vertex $y\in Y$, if $M_1^*(y)>0$, $y$ distributes its remaining charge equally among all vertices (if any) in 
	$x\in N(y)\cap X^-$.  Denote the resulting charge by $M_2^*$. 
\end{enumerate}
\end{flushright} 

\begin{CLA}\label{Claim2}
	For each $v\in V(G)$, if $M_2^*(v)\ge M_2(v)$, then $\alpha(G)<\frac{d+2+\omega}{2d+4+\omega}n$.  Consequently, Theorem~\ref{THM-main} holds. 
\end{CLA}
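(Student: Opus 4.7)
My plan is to exploit charge conservation. Steps~1 and~2 of the Discharging Rule only move charge between vertices, so I will have
\[
\sum_{v \in V(G)} M_2^*(v) \;=\; \sum_{v \in V(G)} M_1(v).
\]
Feeding in the hypothesis $M_2^*(v) \ge M_2(v)$ for every $v$ and summing, I will obtain $\sum_v M_1(v) \ge \sum_v M_2(v)$. Unwinding the definitions of $M_1$ and $M_2$ on $X$ and on $Y$ then gives the pivotal inequality
\[
(d+2)\sum_{x\in X} d(x) \;+\; \omega\Delta|Y| \;\ge\; (d+2)\Delta|X|.
\]

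Next, I will bound $\sum_{x\in X} d(x)$ using the independence of $X$: every edge incident to a vertex of $X$ has its other endpoint in $Y$, so
\[
\sum_{x\in X} d(x) \;=\; e(X,Y) \;\le\; \sum_{y\in Y} d(y) \;\le\; \Delta|Y|.
\]
After substituting this bound, cancelling $\Delta$, and invoking $|X|+|Y|=n$, straightforward algebra yields $|X| \le \frac{d+2+\omega}{2d+4+\omega}\,n$.

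The only step I expect to require a little extra care is upgrading this to a strict inequality. The idea is that equality in $\sum_{x\in X} d(x)\le \Delta|Y|$ would force every $y\in Y$ to satisfy $d(y)=\Delta$ and $N(y)\subseteq X$; in particular $Y$ would be independent, contradicting the opening observation of the proof of Theorem~\ref{THM-main} that $Y$ cannot be independent (otherwise $G$ is bipartite, hence of class~$1$, contradicting $\Delta$-criticality). With strict inequality in hand, I conclude $\alpha(G)=|X|<\frac{d+2+\omega}{2d+4+\omega}\,n$. Substituting the values of $\omega$ from~\eqref{omega} for $d\in\{3,4\}$ (where $\omega=2$ gives $\frac{7}{12}$ and $\frac{4}{7}$) and for $d\ge 19$ (where $\omega=\sqrt[3]{(d-1)d}$) recovers exactly the three bounds of Theorem~\ref{THM-main}, settling the ``Consequently'' assertion.
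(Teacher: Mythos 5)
Your proposal is correct and is essentially the paper's own argument: both rest on charge conservation under Steps~1 and~2, the identity $\sum_v M_1(v)=(d+2)\sum_{x\in X}d(x)+\omega\Delta|Y|$ (the paper packages this via Step~0 and $M_0$, whose total is $(d+2+\omega)\Delta|Y|$), and the non-independence of $Y$ to make the key inequality strict. The algebra and the final substitution of $\omega$ match the paper exactly.
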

\pf By Step 0 of Discharging Rule, 
\begin{eqnarray*}
	M_0^*(x) \quad =& \sum\limits_{y\in N(x)}(d+2)=(d+2)d(x)=M_1(x), & \text{for each $x\in X$;}\\
		M_0^*(y) \quad =& M_0(y)-\sum\limits_{x\in N(y)\cap X}(d+2) & \\
		\ge &(d+2+\omega)\Delta-(d+2)\Delta=\omega\Delta=M_1(y), &\text{for each $y\in Y$.}
\end{eqnarray*}
Since $G$ is $\Delta$-critical and so it is not bipartite, there exists $y\in Y$ so that 
$|N(y)\cap X|<\Delta$ and thus $M_0^*(y)>M_1(y)$. Hence, 
$$
\sum\limits_{v\in V(G)}M_1(v)<\sum\limits_{v\in V(G)}M_0^*(v)=\sum\limits_{v\in V(G)}M_0(v)=(d+2+\omega)\Delta|Y|. 
$$
For each $v\in V(G)$, if $M_2^*(v)\ge M_2(v)$, since $M_2^*$ is obtained based on $M_1$ by Steps 1 and 2 of  Discharging Rule, then we have that 
$$
(d+2)\Delta|X|=\sum\limits_{v\in V(G)}M_2(v)\le \sum\limits_{v\in V(G)}M_2^*(v)=\sum\limits_{v\in V(G)}M_1(v)<(d+2+\omega)\Delta|Y|. 
$$
The above inequality implies that 
$\alpha(G)=|X|<\frac{d+2+\omega}{2d+4+\omega}n$. Plugging in the values of $\omega$ in $\frac{d+2+\omega}{2d+4+\omega}n$ gives the desired bounds on $\alpha(G)$ in Theorem~\ref{THM-main}.
\qqed 

By Claim~\ref{Claim2}, we only need to show that for each $v\in V(G)$, $M_2^*(v)\ge M_2(v)$ holds. We show this by considering  different cases according to which set $v$ belongs to. 

\begin{CLA}\label{Claim3}
	For each $y\in Y$,  $M_2^*(y)\ge M_2(y)=0$. 
\end{CLA}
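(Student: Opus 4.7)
The plan is to first observe that Step~2 of the Discharging Rule either leaves $M_1^*(y)$ untouched (when $M_1^*(y)\le 0$, or when $N(y)\cap X^-=\emptyset$) or redistributes the positive surplus $M_1^*(y)$ completely to $N(y)\cap X^-$ (leaving $M_2^*(y)=0$). In either case $M_2^*(y)\ge 0$ is equivalent to $M_1^*(y)\ge 0$, which, upon unwinding
\[
M_1^*(y)=\omega\Delta-\sum_{x\in N(y)\cap X^+}g_1(d(x)),
\]
reduces the claim to the inequality
\[
\sum_{x\in N(y)\cap X^+}g_1(d(x))\;\le\;\omega\Delta.
\]

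The main step is to bound each summand by combining Claim~\ref{Claim1} with Vizing's Adjacency Lemma. Let $Z=\{v\in V(G):d(v)=\Delta\}$ and $t=|N(y)\cap Z|$. For each $x\in N(y)\cap X^+$ we have $q\le d(x)<\Delta$, so $x\notin Z$ and Claim~\ref{Claim1} yields $g_1(d(x))\le g_2(d(x))=\omega\Delta/(d(x)-1)$. Applying Lemma~\ref{vizing adjacency lemma} to the edge $xy$ produces $\Delta-d(x)+1$ neighbors of $y$ of degree $\Delta$ distinct from $x$; since $x\notin Z$ this gives $t\ge\Delta-d(x)+1$, equivalently $d(x)-1\ge\Delta-t$. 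Hence $g_1(d(x))\le \omega\Delta/(\Delta-t)$.

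Summing, since $X^+\cap Z=\emptyset$ we have $|N(y)\cap X^+|\le d(y)-t\le\Delta-t$, and therefore
\[
\sum_{x\in N(y)\cap X^+}g_1(d(x))\;\le\;(\Delta-t)\cdot\frac{\omega\Delta}{\Delta-t}\;=\;\omega\Delta,
\]
as required. The degenerate case $t=d(y)$ (every neighbor of $y$ has degree $\Delta$) makes the sum empty and the bound is trivial.

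The argument is essentially a direct marriage of Claim~\ref{Claim1} with a single invocation of VAL, so I do not foresee any substantive technical obstacle. The only subtlety worth highlighting is that the two cancelling factors of $\Delta-t$ arise from genuinely different sources --- a Vizing-type lower bound on $d(x)-1$ and a crude cardinality count of the non-$\Delta$ neighbors of $y$ --- and that the Kempe-chain machinery built up in Lemmas~\ref{LEM:largedegree}--\ref{bbcolor} is not invoked here; those stronger adjacency lemmas will presumably be needed for the subsequent claims bounding $M_2^*(x)$ when $x\in X^-$.
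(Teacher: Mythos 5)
Your proof is correct and is essentially the paper's argument: both reduce the claim to $M_1^*(y)\ge 0$, bound each $g_1(d(x))$ by $g_2(d(x))=\omega\Delta/(d(x)-1)$ via Claim~\ref{Claim1}, and use Lemma~\ref{vizing adjacency lemma} to show the number of summands and the size of each summand cancel to give exactly $\omega\Delta$. The only cosmetic difference is that you parametrize by $t=|N(y)\cap Z|$ and bound $d(x)-1\ge\Delta-t$ for every $x\in N(y)\cap X^+$, whereas the paper parametrizes by $k_0=\min\{d(x):x\in N(y)\cap X^+\}$ and bounds $|N(y)\cap X^+|\le k_0-1$; the two bookkeepings are equivalent.
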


\pf Let $y\in Y$, and let $k_0=\min\{d(x)\,:\, x\in N(y)\cap X^+\}$. 
By Lemma~\ref{vizing adjacency lemma}, $y$ is adjacent to at least $\Delta -k_0+1$ neighbors of degree $\Delta$. 
Thus $y$ is adjacent to at most $d(y)-(\Delta -k_0+1)\le k_0-1$ neighbors in $X^+\cup X^-$. 

By Step 2 in Discharging Rule, to show $M_2^*(y)\ge 0=M_2(y)$, it suffices to show that 
$M_1^*(y)\ge 0$. 
By Step 1 in Discharging Rule,  we have that 
$$
M_1^*(y)=M_1(y)-\sum\limits_{x\in N(y)\cap X^+}g_1(d(x)).
$$
By Claim~\ref{Claim1}, for $x\in X^+$, $g_1(d(x))\le g_2(d(x))$. 
Since $g_2(k)$ is decreasing  and $k_0$ is the minimum value among the degrees of $x$ in $N(y)\cap X^+$,  $g_2(d(x))\le \frac{\omega \Delta}{k_0-1}$. Combining the arguments above,  
we get that
\begin{eqnarray*}
	M_1^*(y)& =& M_1(y)-\sum\limits_{x\in N(y)\cap X^+}g_1(d(x))\ge M_1(y)-\sum\limits_{x\in N(y)\cap X^+}g_2(d(x))\\
& \ge & M_1(y)-|N(y)\cap X^+|\frac{\omega \Delta}{k_0-1} \ge M_1(y)-(k_0-1)\frac{\omega \Delta}{k_0-1} \\
	&= & \omega \Delta -\omega\Delta =0.
\end{eqnarray*}
\qqed 

\begin{CLA}\label{Claim4}
	For each $x\in X^{++}\cup X^+$,  $M_2^*(x)\ge M_2(x)=(d+2)\Delta$. 
\end{CLA}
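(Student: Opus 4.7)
The plan is to prove this claim by a direct charge count, since Step~1 of the discharging rule was designed to exactly compensate $x\in X^+$ for having degree below $\Delta$, and vertices in $X^{++}$ need no compensation at all. The only structural fact I would use is that $X$ is an independent set, which forces every neighbor of $x$ to lie in $Y$ and therefore to participate fully in Step~1.

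I would split into two cases by which of $X^{++}$ or $X^+$ contains $x$. For $x\in X^{++}$, we have $d(x)=\Delta$, so the initial charge already satisfies $M_1(x)=(d+2)\Delta=M_2(x)$. Since $x$ belongs to neither $X^+$ nor $X^-$, it does not receive any charge in Step~1 or Step~2, and the inequality $M_2^*(x)\ge M_2(x)$ holds with equality. For $x\in X^+$, independence of $X$ gives $N(x)\subseteq Y$, so each of the $d(x)$ neighbors of $x$ lies in $Y$ and, by Step~1, sends charge $g_1(d(x))=(d+2)(\Delta-d(x))/d(x)$ to $x$. Summing, $x$ receives total charge $d(x)\cdot g_1(d(x))=(d+2)(\Delta-d(x))$ in Step~1, so
\[
M_1^*(x)=(d+2)d(x)+(d+2)(\Delta-d(x))=(d+2)\Delta.
\]
Since $x\notin X^-$, no charge is transferred to $x$ in Step~2, and we conclude $M_2^*(x)=M_1^*(x)=(d+2)\Delta=M_2(x)$, as required.

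There is no genuine obstacle here: this is the bookkeeping half of the discharging argument, and the function $g_1$ was engineered precisely so that Step~1 saturates the charge at exactly $(d+2)\Delta$ for vertices in $X^+$. The delicate case, by contrast, will be $x\in X^-$, where the charge received in Step~2 depends on how many low-degree neighbors each $y\in N(x)\cap Y$ must share its residual charge among. Controlling that distribution is where Vizing's Adjacency Lemma together with the new adjacency lemmas from Section~2 (Lemmas \ref{LEM:largedegree}--\ref{bbcolor} and Corollary \ref{bbcolor2}) will do the real work, but none of that machinery is needed for the present claim.
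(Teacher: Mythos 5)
Your proof is correct and follows essentially the same route as the paper: for $x\in X^{++}$ the charge is already $(d+2)\Delta$ and is never altered, and for $x\in X^+$ each of the $d(x)$ neighbors (all in $Y$ by independence of $X$) contributes $g_1(d(x))$ in Step~1, giving exactly $(d+2)\Delta$. No gaps.
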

\pf For each $x\in X^{++}$, by Step 0,  we have that 
$$
M_0^*(x)=\sum\limits_{y\in N(x)\cap Y}(d+2)=(d+2)\Delta,
$$
where we get  $|N(x)\cap Y|=\Delta$ since $X$ is an independent set in $G$.
The charge of $x\in X^{++}$ keeps unchanged in Steps 1 and 2, thus 
$M_2^*(x)=M_0^*(x)=(d+2)\Delta$. 

For each $x\in X^+$, by Discharging Rule, 
$$
M_2^*(x)=M_1^*(x)=M_1(x)+\sum\limits_{y\in N(x)} g_1(d(x))=(d+2)d(x)+\sum\limits_{y\in N(x)}\frac{(d+2)(\Delta -d(x))}{d(x)}=(d+2)\Delta. 
$$
\qqed 

The next claim will be used for showing that for each $x\in X^-$, $M_2^*(x)\ge M_2(x)=(d+2)\Delta$. 

\begin{CLA}\label{Claim5}
Let $\ell$ be a nonnegative integer and $y\in Y$ be a neighbor of $x\in X^-$, and  
$k=d(x)$.  If $\sigma_q(x,y)\ge \Delta -k+1+\ell$, then $y$ gives $x$ 
at least 
$$
h(k,\ell)=\frac{1}{k-\ell-1}(\omega \Delta-\ell g_1(q))=\frac{1}{k-\ell-1}(\omega \Delta-\ell \frac{(d+2)\omega}{d+2-\omega})
$$
in Step 2. 
\end{CLA}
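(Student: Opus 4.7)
The plan is to apply Vizing's Adjacency Lemma to the edge $xy$ in order to control the number of ``heavy'' $X^+$-neighbors of $y$, and then combine this with a direct count of the low-degree neighbors of $y$ to estimate the share of charge that $x$ receives in Step~2 of the Discharging Rule. To set up, I partition $N(y)-\{x\}$ by class and degree: write $a=|N(y)\cap X^{++}|$, $b=|N(y)\cap X^+|$, $c=|N(y)\cap X^-|$ (which contains $x$, so $c\ge 1$), and let $e$ and $f$ count the neighbors of $y$ in $Y$ of degree $\ge q$ and $<q$, respectively. Setting $\ell'=\sigma_q(x,y)-(\Delta-k+1)\ge \ell$, the identity $\sigma_q(x,y)=a+b+e$ together with $d(y)\le \Delta$ immediately yield $c\le \Delta-\sigma_q=k-1-\ell'$, and in particular $\ell'\le k-2$ since $c\ge 1$.

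The key step is the upper bound $b\le\ell'$. By Lemma~\ref{vizing adjacency lemma}, $y$ has at least $\Delta-k+1$ neighbors of degree $\Delta$ other than $x$. Every such neighbor lies in $X^{++}$ (counted by $a$) or in $Y$ with degree $\Delta$, and in the latter case it is automatically counted inside $e$, because $e$ includes \emph{every} $Y$-neighbor of $y$ of degree $\ge q$. Hence $a+e\ge \Delta-k+1$, and subtracting this from $\sigma_q=a+b+e$ gives $b\le\sigma_q-(\Delta-k+1)=\ell'$. Since $g_1$ is decreasing and $d(x')\ge q$ for every $x'\in N(y)\cap X^+$, we have
\[
M_1^*(y)\;=\;\omega\Delta-\sum_{x'\in N(y)\cap X^+}g_1(d(x'))\;\ge\;\omega\Delta-b\,g_1(q)\;\ge\;\omega\Delta-\ell'\,g_1(q),
\]
which is strictly positive because $\ell'\le k-2<q$. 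Dividing by $c\le k-1-\ell'$ shows that the charge $x$ receives from $y$ in Step~2 is at least $(\omega\Delta-\ell' g_1(q))/(k-1-\ell')=h(k,\ell')$.

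To close the argument I verify that $h(k,\ell')\ge h(k,\ell)$. A direct computation gives
\[
\frac{\partial h}{\partial \ell} \;=\; \frac{\omega\Delta-(k-1)\,g_1(q)}{(k-1-\ell)^2},
\]
which is nonnegative exactly when $k\le q+1$; since $k=d(x)<q$ for $x\in X^-$, this condition holds, so $h(k,\cdot)$ is nondecreasing on the relevant range and the claim follows. I do not foresee any serious obstacle; the only subtle point is to remember that the Vizing-guaranteed degree-$\Delta$ neighbors of $y$ lying in $Y$ should be absorbed into the count $e$ rather than inflating $b$, and the rest is bookkeeping and a short monotonicity check.
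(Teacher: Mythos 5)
Your proof is correct and follows essentially the same route as the paper's: both invoke Vizing's Adjacency Lemma to reserve $\Delta-k+1$ degree-$\Delta$ neighbors of $y$, bound the Step-1 outflow by (number of $X^+$-neighbors of $y$) times $g_1(q)$, and bound the number of Step-2 recipients by $k-\ell-1$. The only organizational difference is that you work with the true excess $\ell'$ and finish with a monotonicity check on $h(k,\cdot)$, and you dispose of all $X^+$-neighbors at once via $b\le\ell'$, whereas the paper fixes exactly $\ell$ heavy neighbors and handles any further $X^+$-neighbors among the remaining $k-\ell-1$ by a fair-share (mediant) argument; both reductions ultimately rest on the same inequality $k-1\le q$.
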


\pf Let $L^{++}$ be a set of $\Delta -k+1$ neighbors of $y$ with degree $\Delta$, and let $L^+$ be a set, disjoint from $L^{++}$, of $\ell$ neighbors of $y$ with degree at least $q$, which exists since $\sigma_q(x,y)\ge \Delta -k+1+\ell$. Then in Steps 1 and 2, $y$ gives nothing to vertices in $L^{++}$,
and in Step 1, for each vertex $x'\in N(y)\cap L^+$, $y$ gives $g_1(d(x'))\le g_1(q)$ to $x'$. In Step 2, $y$'s remaining charge of at least $\omega \Delta -\ell g_1(q)$ is divided among $y$'s remaining $d(y)-(\Delta -k+1+\ell)\le k-\ell -1$ neighbors. Denote the set of these remaining neighbors of $y$ by $L$. 
For $x$, being in $X^-$, receives charge of at least 
$\frac{\omega \Delta-\ell g_1(q)}{k-\ell -1}$. This is because  of the following arguments.
(a) For each $x'\in L\cap X^+$, $g_1(d(x'))\le g_1(q)\le g_2(q)\le g_2(k)=\frac{\omega \Delta}{k-1}$ holds, and thus
\begin{eqnarray*}
	\frac{\omega \Delta -\ell g_1(q)}{k-\ell-1}&\ge &	\frac{\omega \Delta -\ell g_2(q)}{k-\ell-1}\ge \frac{\omega \Delta -\ell g_2(k)}{k-\ell-1}\\
	& = & \frac{\omega \Delta -\frac{\ell \omega \Delta}{k-1}}{k-\ell-1}=g_2(k).
\end{eqnarray*}
(b) Thus by (a),  $g_1(d(x'))\le g_1(q)\le g_2(q)\le g_2(k)\le \frac{\omega \Delta -\ell g_1(q)}{k-\ell-1}$, 
and therefore the charge that $y$ gives to $x$ is 
\begin{eqnarray*}
	\frac{\omega \Delta -\ell g_1(q)-\sum\limits_{x'\in L\cap X^+}g_1(d(x'))}{|L|-|L\cap X^+|}&\ge &	\frac{\omega \Delta -\ell g_1(q)-|L\cap X^+|\frac{\omega \Delta -\ell g_1(q)}{k-\ell-1}}{|L|-|L\cap X^+|}\\
	& \ge  & \frac{\omega \Delta -\ell g_1(q)-|L\cap X^+|\frac{\omega \Delta -\ell g_1(q)}{k-\ell-1}}{k-\ell-1-|L\cap X^+|}= \frac{\omega \Delta -\ell g_1(q)}{k-\ell-1}.
\end{eqnarray*}
\qqed

\begin{CLA}\label{Claim6}
	For each $x\in X_1^{-}$,  $M_2^*(x)\ge M_2(x)=(d+2)\Delta$. 
\end{CLA}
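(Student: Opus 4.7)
Fix $x\in X_1^-$ with $k=d(x)$, so $\ve\Delta\le k<q$; since $X$ is independent, $N(x)\subseteq Y$. After Step~1 the charge at $x$ is $M_1(x)=(d+2)k$, so it suffices to show that in Step~2 the $k$ neighbors of $x$ collectively transfer to $x$ a total of at least $(d+2)(\Delta-k)$. The engine is Claim~\ref{Claim5}: if $\sigma_q(x,y)\ge \Delta-k+1+\ell$ then $y$ contributes $h(k,\ell)$, and differentiation shows $h(k,\ell)$ is strictly increasing in $\ell$ throughout $\ell<k-1$ (the condition reduces to $k\le q$, exactly our regime), so larger $\ell$ means a better per-neighbor lower bound.

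Vizing's Adjacency Lemma provides $\sigma_q(x,y)\ge \Delta-k+1$ for every $y$, i.e., $\ell=0$. A direct calculation shows that plugging in $\ell=0$ yields $M_2^*(x)\ge (d+2)k+k\omega\Delta/(k-1)$, which meets $(d+2)\Delta$ near $k=q$ but falls short near $k=\ve\Delta$ by $\Theta((d+2-2\omega)\Delta^2/(d+2))$. So we must amplify $\ell$ using Lemma~\ref{LEM:largedegree}. For $\varphi\in\CC^\Delta(G-xy^*)$ with $y^*\in N(x)$, Lemma~\ref{LEM:largedegree} asserts that all but at most $N$ vertices $z\in N(x,M_\varphi)$ satisfy $|\pb(z)-\{\varphi(xz)\}|<\lambda\Delta$, which by $|\pb(z)|=(\Delta-d(z))+\sigma_{<q}(z)$ yields $\sigma_{<q}(z)\le d(z)-(1-\lambda)\Delta+1$. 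Since $d(x)<q$ means $x$ is already one of the low-degree neighbors of $z$, this translates to $\sigma_q(x,z)\ge (1-\lambda)\Delta-1$, and hence $\ell_z\ge k-\lambda\Delta-3$, essentially $k-2$.

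To apply this boost to every $y\in N(x)$ we vary the removed edge: for a target $y$, pick $y^*\in N(x)\setminus\{y\}$, and via a Kempe-chain swap along a suitable $(\gamma,\delta)$-path (in the style of the proof of Lemma~\ref{bbcolor}) produce $\varphi'\in\CC^\Delta(G-xy^*)$ with $\varphi'(xy)\in\pbar(y^*)\subseteq M_{\varphi'}$, so that $y\in N(x,M_{\varphi'})$. Unless $y$ is among the at most $N$ exceptional vertices of Lemma~\ref{LEM:largedegree}, we obtain $\ell_y\approx k-2$. Summing over $y\in N(x)$ and handling the at most $N$ exceptions with the baseline $\ell=0$, a direct computation gives
\[
M_2^*(x)\ \ge\ (d+2)k+(k-N)\,h(k,\,k-\lambda\Delta-3)+N\cdot\frac{\omega\Delta}{k-1}\ \ge\ (d+2)\Delta
\]
for all $\ve\Delta\le k<q$, since $N$ depends only on $d$ while $\Delta\ge D_0\gg N$. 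The main obstacle is the Kempe-chain engineering that places $y$ into $N(x,M_{\varphi'})$ as a non-exceptional vertex; once that is arranged the remaining bookkeeping for the $N$ exceptions is absorbed by the slack in the final inequality.
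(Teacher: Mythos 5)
Your overall architecture (Claim~\ref{Claim5} as the engine, baseline $\ell=0$ insufficient near $k=\ve\Delta$, amplification via Lemma~\ref{LEM:largedegree}) matches the paper's, but the way you try to apply Lemma~\ref{LEM:largedegree} to \emph{every} neighbor contains a genuine gap. Lemma~\ref{LEM:largedegree} only speaks about vertices in $N(x,M_\varphi)$ for one fixed $\varphi\in\CC^\Delta(G-xy^*)$, and a neighbor $y$ lies in $N(x,M_\varphi)$ only if $\varphi(xy)\in A_\varphi\cup\pbar(x)\cup\pbar(y^*)$. Your plan to force $\varphi'(xy)\in\pbar(y^*)$ by a Kempe swap ``in the style of Lemma~\ref{bbcolor}'' does not go through: that argument starts from a color $\alpha=\varphi(xz)$ that is \emph{already} in $A_\varphi\cup\pbar(y^*)$ (so the endpoint $v$ of the $\alpha$-edge at $y^*$ has $d(v)<q$, guaranteeing $\pbar(x)\cap\pbar(v)\ne\emptyset$ and making the $(\gamma,\delta)$-chain work). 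If $\varphi(xy)\in B_\varphi(x,y^*,q)$, the corresponding $v$ has degree at least $q$, no common missing color with $x$ is available, and no lemma in the paper lets you move such a color into $M_{\varphi}$. So the central claim that all but $N$ neighbors can be boosted to $\ell\approx k-\lambda\Delta$ is unsupported. A second, independent accounting problem: since you use a \emph{different} coloring $\varphi_y$ for each target $y$, the ``at most $N$ exceptional vertices'' bound applies per coloring, and there is no bound on how many neighbors are exceptional in their own colorings; summing the exceptions over $k$ colorings is not controlled by $N$.

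The paper resolves both issues with a single coloring and a trade-off rather than a per-neighbor boost. It sets $p=\min_{y'\in N(x)}\{\sigma_q(x,y')-(\Delta-k+1)\}$, fixes $\varphi\in\CC^\Delta(G-xy)$ for a minimizing $y$, and observes that exactly the $p$ neighbors in $N(x,B_\varphi)$ (plus $y$) escape Lemma~\ref{LEM:largedegree}; but by the minimality of $p$ \emph{every} neighbor already satisfies $\sigma_q(x,\cdot)\ge\Delta-k+1+p$, so those escapees get the baseline $h(k,p)$ rather than $h(k,0)$. The resulting bound $M(k,p)=(k-p-1-N)h(k,k-\lambda\Delta-1)+(p+1+N)h(k,p)$ is then minimized over $p$ by an AM--GM step. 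To repair your proof you would need either to justify the recoloring step for colors in $B_\varphi$ (which would be a new structural lemma, not a corollary of Lemma~\ref{bbcolor}), or to adopt the paper's device of letting $p$ itself raise the baseline for the unboosted neighbors.
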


\pf Let $x\in X_1^-$ and let $k=d(x)$. Note that by the definition of $X_1^-$, $\ve \Delta\le k<q=(1-\ve)\Delta$. Let 
$$
p=\min\limits_{y'\in N(x)}\{\sigma_q(x,y')-(\Delta -k+1)\}. 
$$
Assume that $y\in N(x)$ achieves  $\sigma_q(x,y)=(\Delta -k+1)+p$. 
Since $d(x)<q$,  we have $d(u)\ge q$ for any $u\in N(y)$ such that 
$\varphi(yu)\in \pbar(x)$. Thus, because $|\pbar(x)|=\Delta -k+1$, by the definition of $B_\varphi(x,y,q)$, 
we have that $|B_\varphi(x,y,q)|=p$. 
Since $M_\varphi(x,y,q)\cap \varphi(x)=A_\varphi(x,y,q)\cup \pbar(y)$,  we get that 
$|M_\varphi(x,y,q)\cap \varphi(x)|\ge \Delta-\sigma_q(x,y)= k-p-1.$
By Lemma~\ref{LEM:largedegree}, $x$ has at least $k-1-p-N$ neighbors $z\in N(x, M_\varphi)$ such that 
$\sigma_q(x,z)\ge \Delta-1-(\lambda \Delta-1) =\Delta-\lambda \Delta  +(\Delta -k+1)-(\Delta -k+1)= (\Delta -k+1)+k-\lambda \Delta -1$,
where  $c_0=\lceil \frac{1-\ve}{\ve}\rceil$,  $\lambda =\frac{\omega ^3}{2(d+2)^3}$, and
$N=(c_0+2)(\frac{1}{\lambda}+1)^{3c_0+2}$.  
The remaining $p+1+N$ neighbors $z'$ of $x$ satisfies 
$\sigma_q(x,z')\ge \Delta -k+1+p$.  By Claim~\ref{Claim5}, in Step 2, $x$ receives charge of 
at least 
$$
M(k,p)=(k-p-1-N)h(k,k-\lambda \Delta -1)+(p+1+N)h(k,p).
$$
By Step 0, we have that $M_1(x)=(d+2)k$, so we only need to show that 
$M(k,p)\ge (d+2)(\Delta -k)$.  Recall that $h(k,\ell)=\frac{1}{k-\ell-1}(\omega \Delta-\ell g_1(q))$.
Then 
\begin{eqnarray*}
	M(k,p)&=& (k-p-1-N)h(k,k-\lambda \Delta -1)+(p+1+N)h(k,p)\\
	&= & \frac{k-p-1-N}{\lambda \Delta +1}\left(\omega \Delta -kg_1(q)+(\lambda \Delta +1)g_1(q)\right)\\
	&&+\frac{p+1+N}{k-p-1}(\omega \Delta -pg_1(q))\nonumber \\
	&=& \frac{k-1-p-N}{\lambda \Delta +1}(\omega \Delta -kg_1(q)+(\lambda \Delta +1)g_1(q))\\
	&&+\frac{p+1+N}{k-p-1}(\omega \Delta -kg_1(q)) +(p+1+N)g_1(q)\nonumber \\
	&=& \frac{k-p-1}{\lambda \Delta +1}(\omega \Delta -kg_1(q))-\frac{N}{\lambda \Delta +1}(\omega \Delta -kg_1(q))\\
	&&+\frac{k+N}{k-p-1}(\omega \Delta -kg_1(q))-(\omega \Delta -kg_1(q))+kg_1(q)\\
	&>&2\sqrt{\frac{k}{\lambda \Delta +1}}(\omega \Delta -kg_1(q))-\frac{N}{\lambda \Delta +1}(\omega \Delta -kg_1(q))-(\omega \Delta -kg_1(q))+kg_1(q)\\
		&=&\left( 2\sqrt{\frac{k}{\lambda \Delta +1}}-\frac{N}{\lambda \Delta +1}-1\right)(\omega \Delta -kg_1(q))+kg_1(q).
	\end{eqnarray*} 
Let 
\begin{eqnarray*}\label{f(k)}
	f(k)&=& \left( 2\sqrt{\frac{k}{\lambda \Delta +1}}-\frac{N}{\lambda \Delta +1}-1\right)(\omega \Delta -kg_1(q))+kg_1(q) -(d+2)(\Delta -k)\\
	&=&  \left( 2\sqrt{\frac{k}{\lambda \Delta +1}}-\frac{N}{\lambda \Delta +1}-1\right)(\omega \Delta -kg_1(q))-\frac{d+2}{\omega}(\omega \Delta -kg_1(q))\\
	&=& \left( 2\sqrt{\frac{k}{\lambda \Delta +1}}-\frac{N}{\lambda \Delta +1}-1-\frac{d+2}{\omega}\right)(\omega \Delta -kg_1(q)),
\end{eqnarray*} 
where $d+2+g_1(q)=(d+2)(1+\frac{\omega}{d+2-\omega})=\frac{d+2}{\omega}g_1(q)$.
Since $\omega \Delta -kg_1(q)\ge 0$ always holds, to show that $f(k)\ge 0$, it suffices
to show that $f(\ve \Delta)\ge 0$.   
Recall that $\ve=\frac{\omega}{d+2}$, $\lambda=\frac{\omega^3}{2(d+2)^3}$, 
and $\Delta \ge \frac{N+1}{\ve^3}$. Thus we get that 
\begin{eqnarray*}\label{f(k)}
	f(\ve \Delta)
	&=& \left( 2\sqrt{\frac{\ve \Delta}{\lambda \Delta +1}}-\frac{N}{\lambda \Delta +1}-1-\frac{d+2}{\omega}\right)(\omega \Delta -\omega \Delta g_1(q))\\
	&>& \left(2.5\frac{d+2}{\omega}-\frac{N}{\frac{\omega^3}{2(d+2)^3}\frac{N+1}{\frac{\omega^3}{(d+2)^3}}}-1-\frac{d+2}{\omega}\right)(\omega \Delta -\omega \Delta g_1(q))\\
	&>& (1.5\frac{d+2}{\omega}-3)(\omega \Delta -\omega \Delta g_1(q))>0. 
\end{eqnarray*} 

\qqed 

\begin{CLA}\label{Claim7}
	For each $x\in X_2^{-}$,  $M_2^*(x)\ge M_2(x)=(d+2)\Delta$. 
\end{CLA}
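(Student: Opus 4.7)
My plan is to parallel the argument for $X_1^-$ in Claim~\ref{Claim6}, but to replace the ``large-degree'' estimate (Lemma~\ref{LEM:largedegree}) with the sharper ``small-degree'' results (Lemma~\ref{LEM:smalldegree} and Corollaries~\ref{p=1}, \ref{bbcolor2}), which are available precisely because $k := d(x) < \ve\Delta$ for $x \in X_2^-$. Fix such an $x$ and set
\[
p = \min_{y' \in N(x)} \bigl\{\sigma_q(x, y') - (\Delta - k + 1)\bigr\} \geq 0.
\]
Choose $y \in N(x)$ attaining this minimum together with a coloring $\varphi \in \CC^\Delta(G - xy)$. As in Claim~\ref{Claim6}, $|B_\varphi(x, y, q)| = p$, $|N(x, M_\varphi)| = k - 1 - p$, and $|N(x, B_\varphi)| = p$. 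By the definition of $p$, every neighbor $y'$ of $x$ satisfies $\sigma_q(x, y') \geq \Delta - k + 1 + p$, so Claim~\ref{Claim5} guarantees that each neighbor contributes at least $h(k, p)$ to $x$ in Step~2.

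The next step is to exploit Lemma~\ref{LEM:smalldegree}: for each $z \in N(x, M_\varphi)$, the set $\pb(z) - \{\varphi(xz)\}$ lies in $B_\varphi(q)$, and these sets are pairwise disjoint over distinct $z$. Since $d(x) < q$, one has $\varphi(xz) \in \pb(z)$ and a straightforward count gives $|\pb(z)| = \Delta - \sigma_q(x, z)$, so
\[
\sum_{z \in N(x, M_\varphi)} \bigl(\Delta - 1 - \sigma_q(x, z)\bigr) \leq |B_\varphi(q)| = p.
\]
Consequently at least $k - 1 - 2p$ vertices $z \in N(x, M_\varphi)$ satisfy $\sigma_q(x, z) = \Delta - 1$ and thus contribute $h(k, k - 2)$ each by Claim~\ref{Claim5}, while the remaining at most $p$ vertices still satisfy $\sigma_q(x, z) \geq \Delta - p - 1$ and contribute at least $h(k, k - p - 2)$ each. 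In the critical small-$p$ regime I would additionally invoke Corollary~\ref{p=1} (when $p = 1$) and Corollary~\ref{bbcolor2} (when $p = 2$) to exhibit a vertex $z' \in N(x, B_\varphi)$ with every neighbor other than $x$ of degree $\geq q$; combined with Vizing's Adjacency Lemma applied to $xz'$, this upgrades $z'$'s contribution from $h(k, p)$ to a significantly larger value.

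Summing the contributions from all $k$ neighbors of $x$, the charge gained by $x$ in Step~2 is at least
\[
(p + 1)\, h(k, p) + (k - 1 - 2p)\, h(k, k - 2) + p \cdot h(k, k - p - 2),
\]
with further improvements for $p \in \{1, 2\}$ from the Corollaries. The goal is then to verify that this sum is at least $(d + 2)(\Delta - k)$, so that $M_2^*(x) \geq M_1(x) + (d + 2)(\Delta - k) = (d + 2)\Delta$. The dominant term is $(k - 1 - 2p)\, h(k, k - 2) \approx (k - 1 - 2p)\, \omega\Delta$, which comfortably covers the deficit over most of the parameter range. I expect the main obstacle to be the algebra in the regime where $p$ is close to $(k - 1)/2$ and $k$ is near its lower bound ($d$ for $d \geq 19$, or $3d - 3$ for $d \in \{3, 4\}$): there the dominant ``good-vertex'' term shrinks, and the stronger floor $k \geq 3d - 3$ for $d \in \{3, 4\}$ appears to be calibrated precisely so that the resulting inequality, aided by the two Corollaries, still holds.
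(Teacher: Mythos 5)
There is a genuine gap in the quantitative part of your argument. Your structural setup matches the paper's (same definition of $p$, same use of Lemma~\ref{LEM:smalldegree} to get pairwise disjoint sets $\pb(z)-\{\varphi(xz)\}\subseteq B_\varphi(q)$ with $\sum_{z}b_z\le p$ where $b_z=|\pb(z)-\{\varphi(xz)\}|$). But your coarse dichotomy --- ``at least $k-1-2p$ vertices have $b_z=0$ and contribute $h(k,k-2)\approx\omega\Delta$; the remaining at most $p$ vertices contribute only $h(k,k-p-2)\approx\omega\Delta/(p+1)$ each'' --- throws away too much. In the extremal case $p\approx(k-1)/2$ with every $b_z=1$, your first group is empty and your total is roughly
\[
(p+1)h(k,p)+p\,h(k,k-p-2)\approx 2\omega\Delta,
\]
whereas the target is $(d+2)(\Delta-k)\approx(d+2)\Delta$; since $2\omega=2\sqrt[3]{d(d-1)}<d+2$ for $d\ge 19$ (and $2\omega=4<d+2$ for $d=3,4$), the inequality fails. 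The problem is that you bound each vertex with $b_z\ge 1$ by the worst case $b_z=p$, when in the bad configuration each such vertex actually has $b_z=1$ and contributes about $\omega\Delta/2$. The extra invocations of Corollaries~\ref{p=1} and~\ref{bbcolor2} do not rescue this, since $p$ can be large here (up to about $\ve\Delta/2$); in the paper those corollaries are reserved for the handful of exceptional pairs $(k,p)$ in Claim~\ref{Claim8}.

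The paper's proof avoids this loss by keeping the individual $b_i$ and bounding
\[
\sum_{i=1}^{k-p-1}\frac{1}{b_i+1}\;\ge\;\frac{(k-p-1)^2}{k-1}
\]
via Cauchy--Schwarz (the minimum of the left side subject to $\sum b_i\le p$ occurs when the $b_i$ are equal, not when they are concentrated), so the $N(x,M_\varphi)$-vertices contribute about $\frac{(k-p-1)^2}{k-1}\,\omega\Delta$ even when no $b_i$ is zero. A further three-term AM--GM then balances this against the $\frac{k}{k-p-1}(\omega\Delta-kg_1(q))$ term to give a bound of the form $\bigl(3\sqrt[3]{k^2/(4(k-1))}-1\bigr)(\omega\Delta-kg_1(q))+kg_1(q)$, uniform in $p$; the choices $\omega=\sqrt[3]{d(d-1)}$ and the floor $k\ge 3d-3$ for $d\in\{3,4\}$ are calibrated exactly to make $3\sqrt[3]{k^2/(4(k-1))}\ge 1+\frac{d+2}{\omega}$ at the smallest admissible $k$. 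You correctly identified where the difficulty lies ($p$ near $(k-1)/2$), but the averaging inequality that resolves it is the missing idea.
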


\pf Let $x\in X_2^-$ and let $k=d(x)$. Note that by the definition of $X_2^-$, $6\le k<\ve\Delta$ if $d=3$, and $9\le k<\ve\Delta$ if $d=4$, and $d\le k<\ve\Delta$ if $d\ge 19$. Let 
$$
p=\min\limits_{y'\in N(x)}\{\sigma_q(x,y')-(\Delta -k+1)\}. 
$$
Assume that $y\in N(x)$ achieves  $\sigma_q(x,y)=(\Delta -k+1)+p$. Then 
$|M_\varphi(x,y,q)\cap \varphi(x)|\ge d(y)-(\Delta-k+1)-p\ge k-p-1.$
By Lemma~\ref{LEM:smalldegree}, $x$ has at least $k-1-p$ neighbors $z\in N(x, M_\varphi)$  with $\varphi(xz)=\alpha$ such that
$\pb(z)-\{\alpha\}\subseteq B_\varphi(q)$, and for any $z_1,z_2\in N(x, M_\varphi)$, $\pb(z_1)\cap \pb(z_2)=\emptyset$. 
For each $z_i\in N(x, M_\varphi)$, let 
$$
b_i=|\pb(z_i)-\{\varphi(xz_i)\}|.
$$
Then by the definition of $\pb(z_i)$, we have that $\sigma_q(x,z_i)+b_i=\Delta-1$. Thus 
$\sigma_q(x,z_i)\ge \Delta -b_i-1$ for each $z_i\in N(x, M_\varphi)$. 
By Lemma~\ref{LEM:smalldegree}
$\sum\limits_{z_i\in N(x, M_\varphi)}b_i \le |B_\varphi(q)|= p$.
Let
$$
\ell_i=\sigma_q(x,z_i)-(\Delta -k+1)=k-b_i-2 \quad \text{and}\quad h_i(k,\ell_i)=\frac{1}{b_i+1}(\omega \Delta -(k-1)g_1(q))+g_1(q).
$$
The remaining $p+1$ neighbors $z'$ of $x$ satisfies 
$\sigma_q(x,z')\ge \Delta -k+1+p$.  By Claim~\ref{Claim5}, in Step 2, $x$ receives charge of 
\begin{eqnarray}\label{smallk}
M(k,p)&=&\sum\limits_{i=1}^{k-p-1}h_i(k,\ell_i)+(p+1)h(k,p) \\
& =& \sum\limits_{i=1}^{k-p-1}\frac{1}{b_i+1}(\omega \Delta -(k-1)g_1(q))+(k-p-1)g_1(q) + \frac{p+1}{k-p-1}(\omega \Delta -p g_1(q)).\nonumber 
\end{eqnarray}
We claim that $\sum\limits_{i=1}^{k-p-1}\frac{1}{b_i+1}\ge \frac{k-p-1}{1+\frac{p}{k-p-1}}=\frac{(k-p-1)^2}{k-1}$. This follows by applying Cauchy-Schwarz inequality as below by noticing that  $\sum\limits_{z_i\in N(x, M_\varphi)}b_i \le p$,
\begin{eqnarray*}
	\sum\limits_{i=1}^{k-p-1}\frac{1}{b_i+1}(k-1)&=&\sum\limits_{i=1}^{k-p-1}\frac{1}{b_i+1}(p+k-p-1)\ge \sum\limits_{i=1}^{k-p-1}\frac{1}{b_i+1}\sum\limits_{i=1}^{k-p-1}(b_i+1)\\
		&\ge&\left(\sum\limits_{i=1}^{k-p-1}\sqrt{\frac{1}{b_i+1}(b_i+1)}\right)^2=(k-p-1)^2.
\end{eqnarray*} 
Thus, 
\begin{eqnarray*}
	M(k,p)&\ge & \frac{(k-p-1)^2}{k-1}(\omega \Delta -(k-1)g_1(q))+(k-p-1)g_1(q) + \frac{p+1}{k-p-1}(\omega \Delta -p g_1(q))\\
	&> & \frac{(k-p-1)^2}{k-1}(\omega \Delta -kg_1(q))+(k-p-1)g_1(q) \\
	&&+ \frac{p+1}{k-p-1}(\omega \Delta -k g_1(q))+(p+1)g_1(q)\\
		&= & \frac{(k-p-1)^2}{k-1}(\omega \Delta -kg_1(q))+(k-p-1)g_1(q) \\
	&&+ \frac{k}{k-p-1}(\omega \Delta -k g_1(q))+(p+1)g_1(q)-(\omega \Delta -k g_1(q))\\
	&=& \frac{(k-p-1)^2}{k-1}(\omega \Delta -kg_1(q))+kg_1(q)-(\omega \Delta -k g_1(q))\\
	&&+\frac{k}{2(k-p-1)}(\omega \Delta -k g_1(q))+\frac{k}{2(k-p-1)}(\omega \Delta -k g_1(q)) \\
	&\ge & 3 \sqrt[3]{\frac{k^2}{4(k-1)}}(\omega \Delta -kg_1(q))-(\omega \Delta -k g_1(q))+kg_1(q)
\end{eqnarray*} 
Let 
\begin{eqnarray*}\label{f(k)2}
	f(k)&=& \left(3\sqrt[3]{\frac{k^2}{4(k-1)}}-1\right)(\omega \Delta -kg_1(q))+kg_1(q)-(d+2)(\Delta -k)\\
	&=& \left(3\sqrt[3]{\frac{k^2}{4(k-1)}}-1\right)(\omega \Delta -kg_1(q))-\frac{d+2}{\omega}(\omega \Delta -kg_1(q))\\
	&=&\left(3\sqrt[3]{\frac{k^2}{4(k-1)}}-1-\frac{d+2}{\omega}\right)(\omega \Delta -kg_1(q)).
\end{eqnarray*}
Let $f_1(k)=\left(3\sqrt[3]{\frac{k^2}{4(k-1)}}-1-\frac{d+2}{\omega}\right)$. 
Since  $\omega \Delta -kg_1(q)>0$ always holds,  we check that $f_1(3d-3)\ge 0$ when $d=3,4$ and $f_1(d)\ge 0$ when $d\ge 19$. 
Since $\Delta >(d+2)^5$, $\omega =2$ when $d=3,4$, and $\omega =\sqrt[3]{d(d-1)}<\frac{d}{2}$ when $d\ge 19$, we get that 
\begin{eqnarray*}\label{f(k)2}
	f_1(6)&>& 3.649-1-2.5>0, \quad \text{when $d=3$},\\
	f_1(9)&>& 4.08-1-3>0, \quad \text{when $d=4$},\\
	f_1(d)&=& \frac{3d\sqrt[3]{\frac{1}{4}}}{\omega}-1-\frac{d+2}{\omega}>\frac{1.88d}{\omega}-\frac{d+2+\omega}{\omega}>0,  \quad \text{when $d\ge 19$}.
\end{eqnarray*}
\qqed 

\begin{CLA}\label{Claim8}
	For each $x\in X_3^{-}$,  $M_2^*(x)\ge M_2(x)=(d+2)\Delta$. 
\end{CLA}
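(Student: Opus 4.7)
The plan is to follow the template of Claim~\ref{Claim7} but, because the Cauchy--Schwarz/AM--GM estimate used there requires $k \ge 3(d-1)$, to refine the analysis for $x \in X_3^-$ by case analysis on the value $p = \min_{y' \in N(x)}\{\sigma_q(x,y') - (\Delta - k + 1)\}$, extracting extra charge via the sharper adjacency corollaries when $p$ is small. Fix $y \in N(x)$ realizing this minimum and $\varphi \in \CC^\Delta(G - xy)$, so that $|B_\varphi(x,y,q)| = p$. Lemma~\ref{LEM:smalldegree} gives, for each $z_i \in N(x, M_\varphi)$, that $\pb(z_i) - \{\varphi(xz_i)\} \subseteq B_\varphi(x,y,q)$ with these sets pairwise disjoint; set $b_i = |\pb(z_i) - \{\varphi(xz_i)\}|$, so $\sigma_q(x, z_i) \ge \Delta - 1 - b_i$ and $\sum_i b_i \le p$. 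The point is that in the regime $k < 3(d-1)$ this structural information must be combined with Corollary~\ref{p=1} and Corollary~\ref{bbcolor2}.

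First I would handle $p = 0$: then $N(x, B_\varphi) = \emptyset$, and every $z_i$ satisfies $\pb(z_i) = \{\varphi(xz_i)\}$, whence $\sigma_q(x, z_i) = \Delta - 1$. Claim~\ref{Claim5} then yields a charge of at least $h(k, k-2) = \omega\Delta - (k-2)g_1(q)$ from each of the $k-1$ such $z_i$, plus $h(k, 0) = \omega\Delta/(k-1)$ from $y$. Next, for $p = 1$: since the sets $\pb(z_i)$ are disjoint and $|B_\varphi(q)| = 1$, at most one $z^*$ has $b_{z^*} = 1$. If such a $z^*$ exists, Corollary~\ref{p=1} forces the unique $z' \in N(x, B_\varphi)$ to satisfy $\sigma_q(x, z') = \Delta - 1$, and the remaining $k - 3$ vertices of $N(x, M_\varphi)$ still have $\sigma_q = \Delta - 1$; otherwise all $k - 2$ vertices in $N(x, M_\varphi)$ have $\sigma_q = \Delta - 1$. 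In either subcase, Claim~\ref{Claim5} gives a clean sum.

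For $p = 2$ I would invoke Corollary~\ref{bbcolor2}: if $\bigcup_i \pb(z_i)$ covers both colors of $B_\varphi(q)$, then some $z' \in N(x, B_\varphi)$ satisfies $\sigma_q(x, z') = \Delta - 1$; otherwise at most one $z_i$ has $b_i > 0$ and the rest again have $\sigma_q = \Delta - 1$. For $p \ge 3$ I would use the direct formula
\[
M(k,p) \ge \sum_{i=1}^{k-p-1} h(k, k - b_i - 2) + (p+1)\,h(k, p),
\]
together with $\sum_i b_i \le p$ and the finiteness of the range of $p$ (note $p \le k-2$), verified by bookkeeping rather than AM--GM. Because $d \in \{3, 4\}$ and $k \in \{d, \dots, 3(d-1)-1\}$, only finitely many triples $(d, k, p)$ arise, so one concludes by direct numerical comparison against $(d+2)(\Delta - k)$, an inequality linear in $\Delta$ that holds using $\Delta \ge (d+2)^{5d+10}$ and $\omega = 2$.

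The main obstacle will be the $p = 2$ subcase, where Corollary~\ref{bbcolor2} provides only one guaranteed high-$\sigma_q$ vertex in $N(x, B_\varphi)$ and one must carefully balance this contribution against the generic terms from Claim~\ref{Claim5} applied to the remaining neighbors. A secondary subtlety is the degenerate boundary $p = k - 2$, where the denominator $k - \ell - 1$ in $h(k, \ell)$ becomes $1$ for $y$ itself; in this case $y$'s only non-high-degree neighbor other than $x$ is absent, so $x$ absorbs the full residual charge $\omega\Delta - p\,g_1(q)$ and the required bound is in fact stronger than the generic estimate suggests.
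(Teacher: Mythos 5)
Your overall strategy is the same as the paper's: use the generic charge formula from Claim~\ref{Claim7} where it works, and fall back on the refined adjacency results (Corollary~\ref{p=1}, Corollary~\ref{bbcolor2}) for the small values of $p$ where it does not. Your treatment of $p=0$, $p=1$, $p=2$ and the boundary $p=k-2$ matches what the paper does (the paper organizes the exceptions as the pairs $(k,p)\in\{(4,1),(5,2),(6,3),(7,3)\}$ for $d=4$, but the content is the same for $p\le 2$).

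The genuine gap is your claim that for $p\ge 3$ the direct formula plus bookkeeping suffices. It does not: for $d=4$, $(k,p)=(6,3)$, one has $k-p-1=2$ vertices $z_1,z_2$ in $N(x,M_\varphi)$ with $b_1+b_2\le 3$, and the worst case $b_1=1$, $b_2=2$ gives $\sum_i \frac{1}{b_i+1}=\frac{5}{6}$, so (with $g_1(q)=3$, $\omega=2$) the generic bound is $M(6,3)\ge \frac{5}{6}(2\Delta-15)+6+2(2\Delta-9)=\frac{17}{3}\Delta-24.5$, whose $\Delta$-coefficient $\frac{17}{3}$ is strictly less than the required $6$; the inequality $M(6,3)\ge 6(\Delta-6)$ therefore \emph{fails} for all large $\Delta$. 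The same happens for $(7,3)$. These two cases are exactly why the paper proves Lemma~\ref{bbcolor}: when $\bigl(\bigcup_i\pb(z_i)\bigr)\cap B_\varphi(x,y,q)=B_\varphi(x,y,q)$ with $|B_\varphi(x,y,q)|=3$, Lemma~\ref{bbcolor} produces a vertex $z\in N(x,B_\varphi)$ adjacent to at most one vertex other than $x$ of degree less than $q$, and that extra high-$\sigma_q$ neighbor supplies the missing charge (when the union covers at most $2$ colors, $\sum_i b_i\le 2$ and the generic formula does suffice). Corollary~\ref{bbcolor2}, which you do invoke, is restricted to $|B_\varphi(x,y,q)|=2$ and cannot be used here, and you never invoke Lemma~\ref{bbcolor} itself; so your argument as written cannot close the cases $(6,3)$ and $(7,3)$.
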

\pf Let $x\in X_3^-$ and let $k=d(x)$. Note that by the definition of $X_3^-$, $3\le k<3(d-1)$ if $d=3$ and $4\le k<3(d-1)$ if $d=4$. Let 
$$
p=\min\limits_{y'\in N(x)}\{\sigma_q(x,y')-(\Delta -k+1)\}. 
$$
Assume that $y\in N(x)$ achieves  $\sigma_q(x,y)=(\Delta -k+1)+p$. 


When $d=3$, for $k=3,4,5$, and when $d=4$, for  $k=4, 5, 6,7,8$ 
$$\text {except}  \quad  (k,p)\in \{(4,1), (5,2), (6,3), (7,3)\},
$$ we use the same charge function $M(k,p)$ in Equation~\eqref{smallk} as in Claim~\ref{Claim7}. 
Note that for any $z\in N(x)$, 
$z$ is adjacent to at most $\Delta -(\Delta -k+1)-1-p=k-2-p$ vertices other than $x$ of degree less than $q$. Thus, when $p=k-2$, $\ell \ge k-2$, directly by Lemma~\ref{Claim5}:
\begin{eqnarray}\label{large}
M(k,k-2)&\ge &(p+2)(\omega \Delta -pg_1(q))=(k+2)\Delta +(k-2)(\Delta -kg_1(q))\\
&>&(d+2)(\Delta-k). \nonumber
\end{eqnarray}
Also, we obtain the following values by plugging in the value for $p$ in Equation ~\eqref{smallk}:
\begin{eqnarray}
M(k,0)&=&(k-1)(\omega \Delta -(k-2)g_1(q))+\frac{\omega \Delta }{k-1}>(d+2)(\Delta -k). \label{smallp}\\
M(k,1)&\ge& (k-3+\frac{1}{2})(\omega \Delta -(k-1)g_1(q))+\frac{2 }{k-2}(\omega \Delta-g_1(q)) \label{smallp2} \\
&>&(d+2)(\Delta -k). \quad \quad(\text{ $5\le k\le 8$ or $d=3$ and $k=4$}) \nonumber\\
M(k,2)&\ge& (k-5+\frac{1}{2}+\frac{1}{2})(\omega \Delta -(k-1)g_1(q))+\frac{3 }{k-3}(\omega \Delta-2g_1(q)) \label{smallp3} \\
&& (\mbox{the minimum is attained when two of the $b_i$'s are one})\\
&\ge & (d+2)(\Delta -k).  \quad \quad(\text{ $6\le k\le 8$ or $d=3$ and $k=5$})   \nonumber\\
M(k,k-3)&\ge& \frac{4}{k-1}(\omega \Delta -(k-1)g_1(q))+\frac{k-2 }{2}(\omega \Delta-(k-3)g_1(q)) \label{smallp4}\\
&& (\mbox{taking $b_1=b_2=\frac{k-3}{2}$})\\
&>&  6(\Delta -k).  \quad \quad(\text{ $7\le k\le 8$})   \nonumber
\end{eqnarray}
From Equations~\eqref{large} to \eqref{smallp4}, we only need to verify that $M(8,3)\ge 6(\Delta-8)$ and 
$M(8,4)\ge 6(\Delta-8)$ when $d=4$. Indeed,  
\begin{eqnarray*}
	M(8,3)&\ge & 7\Delta -49.5>6(\Delta -8),\\
	M(8,4)&\ge & \frac{20\Delta}{3} -46>6(\Delta -8).
\end{eqnarray*}
We are now left to check that when $d=4$ and $x\in X_3^-$ with $d(x)=k$, and 
 when $ (k,p)\in \{(4,1), (5,2), (6,3), (7,3)\}$,
 $M_2^*(x)\ge 6(\Delta -k)$. 
The charge function $M(k,p)$ in Equation~\eqref{smallk} does not 
give the desired bounds in general, we seek a different approach for the proof. 

For $k=4$ and $p=1$, if there exists $z\in N(x, M_\varphi)$ such that $\pb(z)\cap B_\varphi(x,y,q)\ne \emptyset$, then by Corollary~\ref*{p=1}, we have that 
$x$ has two neighbors such that each of them is adjacent to at most one vertex other than $x$
of degree less than $q$ and the other two neighbors of $x$ each is only adjacent to exactly one vertex of degree less than $q$ which is $x$.
Thus, similar as in Claim~\ref{Claim7}, we have that 
\begin{eqnarray*}
M(4,1)& =& 2(\omega \Delta -(4-2)g_1(q))+2(\frac{\omega \Delta -pg_1(q)}{2})  =6\Delta -18>6(\Delta -6).
\end{eqnarray*}
If for any $z\in N(x, M_\varphi)$, it holds that $\pb(z)\cap B_\varphi(x,y,q)= \emptyset$, then we  have the same charge function as above. 

We then consider $d(x)=k=5$ and $p=2$. We have that $|N(x,M_\varphi)|=2$ and by Lemma~\ref{LEM:smalldegree},
 for $z_1, z_2\in N(x,M_\varphi)$, $\pb(z_1)\cap \pb(z_2)=\emptyset$. 
 If $|(\pb(z_1)\cup \pb(z_2))\cap B_\varphi(x,y,q)|\le 1$, then we know that 
 among the 5 neighbors of $x$, one neighbor of $x$ is adjacent to exactly one vertex of degree less than $q$ which is $x$, and for the other 4 neighbors of $x$, each of them is adjacent to exactly one vertex other than $x$ of degree less than $q$. Hence, by Claim~\ref{Claim5}, we get that 
  \begin{eqnarray*}
  	M(5,2)& =& (\omega \Delta -(5-2)g_1(q))+4(\frac{\omega \Delta -pg_1(q)}{2}) =6\Delta -21>6(\Delta -6).
  \end{eqnarray*}
 Thus, we assume that  $(\pb(z_1)\cup \pb(z_2))\cap B_\varphi(x,y,q)=B_\varphi(x,y,q)$. 
Now by Corollary~\ref{bbcolor2}, there exists $z\in N(x, B_\varphi)$ such that $z$ is adjacent to exactly one vertex of degree less than $q$ which is $x$. 

We next consider $(k,p)=(6,3)$ and $(k,p)=(7,3)$. We have that $|N(x,M_\varphi)|=k-p-1$ and by Lemma~\ref{LEM:smalldegree},
for $z_1, z_2\in N(x,M_\varphi)$, $\pb(z_1)\cap \pb(z_2)=\emptyset$. 
If $|(\pb(z_1)\cup \pb(z_2))\cap B_\varphi(x,y,q)|\le 2$,  by Equation~\eqref{smallk}, we get that 
\begin{eqnarray*}
	M(6,3)& \ge & (\omega \Delta -5g_1(q)) + 2g_1(q)+2(\omega \Delta -3g_1(q))=6\Delta -27>6(\Delta -6);\\
		M(7,3)& \ge & 2(\omega \Delta -6g_1(q)) + 3g_1(q)+\frac{4}{3}(\omega \Delta -3g_1(q))=\frac{20}{3}\Delta -39>6(\Delta -7). 
\end{eqnarray*}
Thus, we assume that  $(\pb(z_1)\cup \pb(z_2))\cap B_\varphi(x,y,q)=B_\varphi(x,y,q)$. 
Now by Lemma~\ref{bbcolor}, there exists $z\in N(x, B_\varphi)$ such that $z$ is adjacent to at most one vertex of degree less than $q$ other than $x$. 
By Claim~\ref{Claim5}, we get the same charge function as above.  
%
\qqed 

The proof of Theorem~\ref{THM-main} is now complete. 
\end{proof}
\bibliographystyle{plain}
\bibliography{SSL-BIB}

\end{document}